\newtheorem{theorem}{Theorem}[section]
\newtheorem{lemma}[theorem]{Lemma}
\newtheorem{corollary}[theorem]{Corollary}
\newtheorem{remark}[theorem]{Remark}
\numberwithin{equation}{section}
\def\exp{\mathop{\hbox{exp}}}
\begin{document}

\baselineskip 20pt

\begin{center}

\textbf{\Large Synchronization of Coupled Stochastic Systems Driven
by Non-Gaussian L\'evy Noises} \footnote{This work has been
partially supported by NSFC Grants 11071165 and 11071199, NSF of
Guangxi Grants 2013GXNSFBA019008 and Guangxi Provincial Department
of Research Project Grants 2013YB102.\\
$^*$Corresponding author: A. Gu~(mathgah@gmail.com).}

\vskip 0.5cm

{\large  Anhui Gu, Yangrong Li}

\vskip 0.3cm

\textit{School of Mathematics and Statistics, Southwest
University, Chongqing, 400715, China}\\

\vskip 1cm

\begin{minipage}[c]{15cm}

\noindent \textbf{Abstract}: We consider the synchronization of the
solutions to coupled stochastic systems of $N$-stochastic ordinary
differential equations (SODEs) driven by Non-Gaussian L\'evy noises
($N\in \mathbb{N})$. We discuss the synchronization between two
solutions and among different components of solutions under certain
dissipative and integrability conditions. Our results generalize the
present work obtained in Liu et al (2010) and Shen et al (2010).
\vspace{5pt}

\noindent\textit{MSC}: 60H10, 34F05, 37H10

\vspace{5pt}

\textit{Keywords}:  Synchronization; L\'evy noise; Skorohod metric;
random attractor; c\`adl\`ag random dynamical system.

\end{minipage}
\end{center}

\vspace{10pt}

\baselineskip 18pt

\section{Introduction}

The synchronization of coupled systems is a well-known phenomenon in
both biology and physics. Description of its diversity of occurrence
can be founded in \cite{ACH}, \cite{AL}, \cite{AVR}, \cite{AR},
\cite{Strogatz}, \cite{PRK}, \cite{Glass}. Synchronization of
deterministic coupled systems has been investigated mathematically
in \cite{AR}, \cite{CRD}, \cite{PZ} for autonomous cases and in
\cite{Kloeden} for non-autonomous systems. For the stochastic cases,
we can refer to the coupled system of It\^{o} SODEs with additive
noise \cite{CK}, \cite{CCK} and multiplicative noise \cite{CKN},
\cite{SZH}. Recently, Shen et al. \cite{SZH} generalized the
multiplicative case to $N$-Stratonovich SODEs. These dissipative
dynamical systems discussed above are focused on the Gaussian noises
(in terms of Brownian motion). However, complex systems in
engineering and science are often subjected to non-Gaussian
fluctuations or uncertainties. The coupled dynamical systems under
non-Gaussian L\'evy noises are considered in \cite{LDLK1},
\cite{LDLK2} and \cite{Gu}.

Let $(\Omega, \mathcal{F}, \mathbb{P})$ be a probability space,
where $\Omega=D(\mathbb{R}, \mathbb{R}^d)$ of c\`adl\`ag functions
with the Skorohod metric as the canonical sample space and denote by
$\mathcal{F}:=\mathcal{B}(D(\mathbb{R}, \mathbb{R}^d))$ the Borel
$\sigma$-algebra on $\Omega$. Let $\mu_L$ be the (L\'evy)
probability measure on $\mathcal{F}$ which is given by the
distribution of a two-sided L\'evy process with paths in $\Omega$,
i.e. $\omega(t)=L_t(\omega)$.

Define $\theta=(\theta_t, t\in \mathbb{R})$ on $\Omega$ the shift by
$$(\theta_t\omega)(s):=\omega(t+s)-\omega(t).$$
Then the mapping $(t, \omega)\rightarrow \theta_t\omega$ is
continuous and measurable \cite{Arnold}, and the (L\'evy)
probability measure is $\theta$-invariant, i.e.
$$\mu_L(\theta_t^{-1}(A))=\mu_L(A),$$
for all $A\in \mathcal{F}$, see \cite{Applebaum} for more details.
Consider the following SODEs system driven by non-Gaussian L\'evy
noises in $\mathbb{R}^{Nd}$,
\begin{equation}
dX_t^{(j)}=f^{(j)}(X_t^{(j)})dt+c_jdL_t^{(j)}, \ \ j=1, \cdots,
N,\label{1.1}\end{equation} where $c_j\in \mathbb{R}^d$, are
constants vectors with no components equal to zero, $L_t^{(j)}$ are
independent two-sided scalar L\'evy processes on $(\Omega,
\mathcal{F}, \mathbb{P})$ satisfying proper conditions which will be
specified later, and $f^{(j)}, j=1, \cdots, N,$ are regular enough
to ensure the existence and uniqueness of solutions and satisfy the
one-sided dissipative Lipschitz conditions
\begin{equation}
\langle x_1-x_2, f^{(j)}(x_1)-f^{(j)}(x_2)\rangle\leq
-l\|x_1-x_2\|^2, \ \ j=1, \cdots, N \label{1.2}\end{equation} on
$\mathbb{R}^d$ for some $l>4$. In addition to \eqref{1.2}, we
further assume the following integrability condition: There exists
$m_0>0$ such that for any $m\in (0, m_0]$, and any c\`adl\`ag
function $X: \mathbb{R}\rightarrow\mathbb{R}^d$ with sub-exponential
growth it follows
\begin{equation}
\int^t_{-\infty}e^{ms}|f^{(j)}(X(s))|^2ds<\infty, \ \ j=1, \cdots,
N. \label{1.3}\end{equation} Without lose of generality, we also
assume the Lipschitz constant $l\leq m_0$.

Set $$x^{(j)}(t, \omega)=X_t^{(j)}-\bar{X}_t^{(j)}, \ \ t\in
\mathbb{R}, \omega\in \Omega, j=1, \cdots, N,$$ where
$$\bar{X}_t^{(j)}=c_je^{-t}\int^t_{-\infty}e^{s}dL_s^{(j)}, \ \ j=1, \cdots, N,$$
are the stationary solutions of the Langevin equations
$$dX_t^{(j)}=-X_t^{(j)}dt+c_jdL_t^{(j)}, \ \ j=1, \cdots, N.$$
Then system \eqref{1.1} can be translated into the following random
ordinary differential equations (RODEs), with right-hand derivative
in time
\begin{eqnarray}
\frac{dx^{(j)}}{dt_+}&=&F^{(j)}(x^{(j)}, \bar{X}_t^{(j)})\nonumber\\
&:=&f^{(j)}(x^{(j)}+\bar{X}_t^{(j)})+x^{(j)}+\bar{X}_t^{(j)}, \ j=1,
\cdots, N.\label{1.4}
\end{eqnarray}
Now we consider the linear coupled RODEs of \eqref{1.4}
\begin{equation}
\frac{dx^{(j)}}{dt_+}=F^{(j)}(x^{(j)}, \bar{X}_t^{(j)})
+\lambda(x^{(j-1)}-2x^{(j)}+x^{(j+1)}), \ j=1, \cdots, N,
\label{1.5}\end{equation} with the coupled coefficient $\lambda>0$,
where $x^{(0)}=x^{(N)}$ and $x^{(N+1)}=x^{(1)}$. Hence \eqref{1.5}
can be written as the following equivalent SODEs
\begin{eqnarray}
dX_t^{(j)}&=&f^{(j)}(X_t^{(j)})+\lambda(X_t^{(j-1)}-2X_t^{(j)}
+X_t^{(j+1)})-\lambda(\bar{X}_t^{(j-1)}-2\bar{X}_t^{(j)}+\bar{X}_t^{(j+1)})
\nonumber\\
&&+c_j dL_t^{(j)}, \ j=1, \cdots, N,\label{1.6}
\end{eqnarray}
where $X_t^{(0)}=X_t^{(N)}$ and $X_t^{(N+1)}=X_t^{(1)}$. For
synchronization of solutions to RODEs system \eqref{1.5}, there are
two cases: one for any two solutions and the other for components of
solutions. When $N=2$, Liu et al. \cite{LDLK1} consider both types
of synchronization. Under the one-sided dissipative Lipschitz
condition \eqref{1.2} and the integrability condition \eqref{1.3},
they firstly proved that synchronization of any two solutions occurs
and the random dynamical system generated by the solution of
\eqref{1.5}$_{N=2}$ has a singleton sets random attractor, then they
obtained that the synchronization between any two components of
solutions occurs as the coupled coefficient $\lambda$ tends to
infinity. The synchronization result implies that coupled dynamical
system share a dynamical feature in some asymptotic sense. Based on
the work of \cite{LDLK1} and \cite{SZH}, we consider the
synchronization of solutions of \eqref{1.5} in the case of $N\geq 3$
and obtain the similar results. We show that the random dynamical
system (RDS) generated by the solution of the coupled RODEs system
\eqref{1.5} has a singleton sets random attractor which implies the
synchronization of any two solutions of \eqref{1.5}. Moreover, the
singleton set random attractor determines a stationary stochastic
solution of the equivalently coupled SODEs system \eqref{1.6}. We
also show that any two solutions of RODEs system \eqref{1.5}
converge to a solution $Z(t, \omega)$ of the averaged RODE
\begin{equation}
\frac{dZ}{dt_+}=\frac{1}{N}\sum_{j=1}^Nf^{(j)}(\bar{X}_t^{(j)}+Z)
+\frac{1}{N}\sum_{j=1}^N(\bar{X}_t^{(j)}+Z),\label{1.7}\end{equation}
as the coupling coefficient $\lambda\rightarrow\infty$. It is worth
mentioning that the generalization is not trivial because new
techniques similar to \cite{SZH} are needed.

\section{Auxiliary Lemmas}
We will frequently use the following auxiliary results.
\begin{lemma} \label{property}
 {\cite{LDLK1}}\ (Pathwise boundedness and convergence.) \
Let $L_t$ be a two-sided L\'evy motion on $\mathbb{R}^d$ for which
$\mathbb{E}|L_1|<\infty$ and $\mathbb{E}|L_1|=\gamma$.
Then we have\\
(A) $\lim_{t\rightarrow\pm \infty}\frac{1}{t}L_t=\gamma$, a.s.\\
(B) the integrals $\int_{-\infty}^t e^{-\delta(t-s)}dL_s(\omega)$
are pathwisely uniformly bounded in $\delta>0$ on finite time
intervals $[T_1, T_2]$ in $\mathbb{R}$;\\
(C) the integrals $\int_{T_1}^t e^{-\delta(t-s)}dL_s(\omega)
\rightarrow 0$ as $\delta\rightarrow\infty$, pathwise on finite time
intervals $[T_1, T_2]$ in $\mathbb{R}$.
\end{lemma}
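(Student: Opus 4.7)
I would handle the three parts in turn, reducing each to classical facts about L\'evy processes. For (A), I would apply the Kolmogorov SLLN to the i.i.d.\ integer increments $\xi_n := L_n - L_{n-1}$ (distributed as $L_1$ with mean $\gamma$ and finite first moment), yielding $L_n/n \to \gamma$ a.s.\ along integers. To pass to the continuum, I would control the maximal oscillation $\sup_{t\in[n-1,n]}|L_t - L_{n-1}|$, which by stationary increments is equidistributed with $\sup_{s\in[0,1]}|L_s|$; this has finite expectation under $\mathbb{E}|L_1|<\infty$ via standard maximal estimates for c\`adl\`ag L\'evy processes. Borel--Cantelli then forces the oscillations to be $o(n)$ a.s., giving $L_t/t \to \gamma$ as $t \to +\infty$; the limit for $t \to -\infty$ follows by applying the same argument to the time-reversed L\'evy process.

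For (B), I would integrate by parts pathwise on $[a,t]$ with $a < T_1$:
$$\int_a^t e^{-\delta(t-s)}\,dL_s \;=\; L_t - e^{-\delta(t-a)} L_a - \delta \int_a^t e^{-\delta(t-s)} L_s\,ds.$$
By (A), $|L_a(\omega)| \le C(\omega)(1+|a|)$ pathwise, while $e^{-\delta(t-a)}$ decays exponentially as $a \to -\infty$, so the boundary term vanishes, giving
$$\int_{-\infty}^t e^{-\delta(t-s)}\,dL_s \;=\; L_t - \delta \int_{-\infty}^t e^{-\delta(t-s)} L_s\,ds.$$
Substituting $u = t-s$ and reapplying the linear growth bound reduces the Lebesgue integral to an expression of the form $C(\omega)\int_0^\infty (1+|t-u|)\,\delta e^{-\delta u}\,du$, which is controlled uniformly for $t \in [T_1,T_2]$; together with the pathwise boundedness of $L_t$ on the compact interval, this yields the claimed uniform bound.

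For (C), the same integration-by-parts identity on $[T_1,t]$ gives
$$\int_{T_1}^t e^{-\delta(t-s)}\,dL_s \;=\; L_t - e^{-\delta(t-T_1)}L_{T_1} - \delta\int_{T_1}^t e^{-\delta(t-s)} L_s\,ds,$$
and as $\delta \to \infty$ the boundary term decays exponentially while the probability kernel $\delta e^{-\delta(t-s)}$ concentrates at $s=t$, so the Lebesgue integral converges to $L_{t-}$; at continuity points of the c\`adl\`ag path the leading $L_t$ cancels this limit, and the integral tends to zero. The main obstacle I anticipate is the continuous-time SLLN in (A): the discrete version is immediate, but promoting it to all $t$ under only a first-moment assumption requires a maximal inequality for L\'evy increments, typically obtained by splitting into a compensated small-jump martingale (Doob's inequality) and a large-jump plus drift piece (handled directly). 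Once (A) is in place, (B) and (C) are essentially deterministic estimates driven by the linear growth and the exponential kernel.
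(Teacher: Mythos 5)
The paper itself gives no proof of this lemma: it is quoted verbatim from \cite{LDLK1}, so there is no in-paper argument to compare against. Your route --- the strong law of large numbers applied to the i.i.d.\ unit increments plus a maximal bound on $\sup_{s\in[0,1]}|L_s|$ and Borel--Cantelli for (A), then pathwise integration by parts combined with the resulting linear growth $|L_s(\omega)|\le C(\omega)(1+|s|)$ for (B) and (C) --- is exactly the standard argument used in that reference (itself adapted from the Caraballo--Kloeden Ornstein--Uhlenbeck lemma), so in spirit you are reconstructing the intended proof. You also silently, and correctly, read $\gamma$ as $\mathbb{E}L_1$ rather than $\mathbb{E}|L_1|$; the statement as printed conflates the two.

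Two points deserve flagging; both are really defects of the statement that your own computation exposes, but you claim the stated conclusions without noting them. First, in (B) your substitution gives
$$\delta\int_0^\infty e^{-\delta u}\bigl(1+|t-u|\bigr)\,du \;\le\; 1+|t|+\frac{1}{\delta},$$
which blows up as $\delta\to 0^+$; indeed $\int_{-\infty}^t e^{-\delta(t-s)}\,dL_s$ cannot be bounded uniformly down to $\delta=0$, since formally it degenerates to $L_t-L_{-\infty}$. So your estimate delivers uniformity only for $\delta\ge\delta_0>0$ (which is all that is ever used later), and you should say so rather than assert a bound uniform ``in $\delta>0$''. Second, in (C) the kernel $\delta e^{-\delta(t-s)}$ concentrates at $s=t^-$, so $\delta\int_{T_1}^t e^{-\delta(t-s)}L_s\,ds\to L_{t-}$ and the whole expression converges to $L_t-L_{t-}=\Delta L_t$, the jump at $t$. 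You correctly restrict to continuity points, but then the conclusion ``tends to $0$'' fails at the (countably many) jump times of the c\`adl\`ag path; the defensible statement is convergence for each fixed $t$ almost surely (L\'evy processes have no fixed discontinuities), or for all but countably many $t$ pathwise. With these two caveats made explicit, the proof is sound and matches the cited source's approach.
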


\begin{lemma} \label{Gronwall} (Gronwall type inequality.)
Suppose that $D(t)$ is a $n\times n$ matrix and $\Phi(t), \Psi(t)$
are $n$-dimensional vectors on $[T_0, T]\ (T\geq T_0,\ T, T_0\in
\mathbb{R})$ which are sufficiently regular. If the following
inequality holds in the componentwise sense
\begin{equation}
\frac{d}{dt_+}\Phi(t)\leq D(t)\Phi(t)+\Psi(t), \ t\geq T_0,
\label{2.1}\end{equation} where
$\frac{d}{dt_+}\Phi(t):=\lim_{h\downarrow
0^+}\frac{\Phi(t+h)-\Phi(t)}{h}$ is right-hand derivative of
$\Phi(t)$. Then
\begin{equation}
\Phi(t)\leq \exp(\int_{T_0}^tD(s)ds)\Phi(T_0)+\int_{T_0}^t
\exp(\int_{\tau}^tD(s)ds)\Psi(\tau)d\tau, \ t\geq T_0.
\label{2.2}\end{equation}
\end{lemma}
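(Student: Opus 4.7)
The plan is to treat this as a vector generalisation of the classical Gronwall inequality adapted to right-hand derivatives. First I would introduce the fundamental matrix $U(t) := \exp(\int_{T_0}^t D(s)\,ds)$, which (under the implicit commutativity/regularity assumptions on $D$) satisfies $\frac{d}{dt}U(t) = D(t)U(t)$ with $U(T_0) = I$, and define the candidate upper bound
\begin{equation*}
H(t) := U(t)\Phi(T_0) + \int_{T_0}^{t} U(t)U(\tau)^{-1}\Psi(\tau)\,d\tau,
\end{equation*}
which is exactly the RHS of \eqref{2.2} and which, by direct differentiation, solves the equality version $\frac{d}{dt}H(t) = D(t)H(t) + \Psi(t)$ with $H(T_0) = \Phi(T_0)$. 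It then suffices to show that the residual $R(t) := \Phi(t) - H(t)$, which starts at $R(T_0) = 0$ and satisfies $\frac{d}{dt_+}R(t) \leq D(t)R(t)$, remains $\leq 0$ componentwise for $t \geq T_0$.

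To convert the differential inequality into the required integral bound I would work with the transformed variable $W(t) := U(t)^{-1} R(t)$. A direct (right-hand) product-rule computation gives
\begin{equation*}
\frac{d}{dt_+}W(t) = U(t)^{-1}\Bigl(\frac{d}{dt_+}R(t) - D(t) R(t)\Bigr) \leq 0
\end{equation*}
componentwise, provided $U(t)^{-1}$ preserves the ordering (this is the point where the structure of $D$ matters; see below). Since $W$ is continuous and its right-hand Dini derivative is nonpositive, a standard one-dimensional comparison lemma applied componentwise yields $W(t) \leq W(T_0) = 0$ for $t \geq T_0$. Multiplying back by $U(t)$ (again assumed order-preserving) recovers $R(t) \leq 0$, which is precisely \eqref{2.2}.

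The main obstacle is the step where componentwise inequalities are pushed through the matrix factors $U(t)$ and $U(t)^{-1}$; for a generic matrix $D(t)$ neither is order-preserving and the naive proof breaks. The cleanest way around this is to observe that in all applications within the paper the coefficient matrix $D(t)$ is a Metzler matrix (its off-diagonal entries are nonnegative, as is the case for the coupling/dissipation matrix arising from \eqref{1.5}), so $U(t) = \exp(\int_{T_0}^t D(s)\,ds)$ has nonnegative entries by the Perron--Frobenius theory for cooperative linear systems; in particular it is order-preserving and invertible with the same property. Alternatively, and more elementarily, one can bypass the transformation entirely: apply the one-dimensional version of the lemma (with right-hand derivatives, proved via the Dini-derivative comparison $\frac{d}{dt_+}w \leq g \Rightarrow w(t) - w(T_0) \leq \int_{T_0}^t g(s)\,ds$) componentwise, iterating or reducing via the scalar Gronwall argument once the off-diagonal couplings have been absorbed into $\Psi$. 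Either route delivers \eqref{2.2}; the Metzler observation is the more conceptual, and I would adopt it as the main line of the proof while noting the scalar fallback for completeness.
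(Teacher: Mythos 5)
The paper gives no proof of this lemma at all---it simply cites Lemma 2.8 of Robinson and the proof of Lemma 2.2 in Shen--Zhou--Han---so your argument must stand on its own. You correctly identify the essential point: as stated for an arbitrary matrix $D(t)$ the lemma is false, and what saves it in this paper is that every coefficient matrix to which it is applied ($D_\lambda$, $\tilde{D}_\lambda$, $\mathbf{H}_\lambda$) is a Metzler matrix, with off-diagonal entries equal to $\lambda\geq 0$. That diagnosis is right and is the key conceptual content here.

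However, your main line of proof contains a concrete error. You assert that for Metzler $D$ the matrix $U(t)=\exp(\int_{T_0}^t D(s)\,ds)$ is order-preserving \emph{and invertible with the same property}. The first half is true; the second is false. The inverse of a nonnegative matrix is nonnegative only for monomial (generalized permutation) matrices, and $U(t)^{-1}=\exp(-\int_{T_0}^t D(s)\,ds)$ is the exponential of a matrix with nonpositive off-diagonal entries, which in general has negative entries. Consequently the step $\frac{d}{dt_+}W(t)=U(t)^{-1}\bigl(\frac{d}{dt_+}R(t)-D(t)R(t)\bigr)\leq 0$ does not follow, and the componentwise monotonicity of $W$ collapses. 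The repair is standard but must actually be made: write $\frac{d}{dt_+}R(t)=D(t)R(t)-q(t)$ with $q(t)\geq 0$, integrate the exact identity $\frac{d}{dt_+}W(t)=-U(t)^{-1}q(t)$ to obtain $W(t)=-\int_{T_0}^t U(s)^{-1}q(s)\,ds$, and only then multiply by $U(t)$; the kernel that appears is the two-parameter propagator $U(t)U(s)^{-1}=\exp(\int_s^t D(r)\,dr)$, which \emph{is} entrywise nonnegative for $t\geq s$ because $\int_s^t D(r)\,dr$ is again Metzler. This uses only the positivity of the forward propagator, never that of $U^{-1}$. Your ``scalar fallback'' is also not a proof as written: absorbing the off-diagonal couplings into $\Psi$ makes $\Psi$ depend on the unknown components, so the componentwise scalar Gronwall argument becomes circular unless you set up an explicit iteration and prove it converges.
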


\begin{proof}
See Lemma 2.8 in \cite{Robinson} and the proof of Lemma 2.2 in
\cite{SZH}.
\end{proof}

\begin{lemma} \label{attractor} {\cite{LDLK1}}\
(Random attractor for c\`adl\`ag RDS.) Let $(\theta, \phi)$ be an
RDS on $\Omega\times \mathbb{R}^d$ and let $\phi$ be continuous in
space, but c\`adl\`ag in time. If there exists a family
$B=\{B(\omega), \omega\in \Omega\}$ of non-empty measurable compact
subsets $B(\omega)$ of $\mathbb{R}^d$ and a $T_{D, \omega}\geq 0$
such that
$$\phi(t, \theta_{-t}\omega, D(\theta_{-t}\omega))\subset B(\omega),
\ \forall t\geq T_{D, \omega},$$ for all families $D=\{D(\omega),
\omega\in \Omega\}$ in a given attracting universe, then the RDS
$(\theta, \phi)$ has a random attractor
$\mathcal{A}=\{\mathcal{A}(\omega), \omega\in \Omega\}$ with the
component subsets defined for each $\omega\in \Omega$ by
$$\mathcal{A}(\omega)=\bigcap_{s>0}\overline{\bigcup_{t\geq s}
\phi(t, \theta_{-t}\omega, B(\theta_{-t}\omega))}.$$ Furthermore, if
the random attractor consist of singleton sets, i.e.
$\mathcal{A}(\omega)=\{X^*(\omega)\}$ for some random variable
$X^*$, then $X^*_t(\omega)=X^*_t(\theta_t\omega)$ is a stationary
stochastic process.
\end{lemma}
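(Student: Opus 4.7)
The plan is to adapt the classical Crauel--Flandoli construction of a random attractor as the $\omega$-limit set of an absorbing family, modified to accommodate the càdlàg-in-time nature of $\phi$. I would take $\mathcal{A}(\omega)$ to be exactly the set written in the statement and verify the three defining properties of a random attractor: compact measurable values, strict $\phi$-invariance, and pullback attraction of every family in the given universe. The stationarity claim will then follow formally from invariance applied to a singleton.

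For non-emptiness and compactness, the absorbing assumption guarantees that for $s \geq T_{B,\omega}$ the set $\bigcup_{t\geq s}\phi(t,\theta_{-t}\omega,B(\theta_{-t}\omega))$ is contained in the compact set $B(\omega)$, so its closure is compact; as $s$ increases these closures form a nested decreasing family of non-empty compacta, whose intersection $\mathcal{A}(\omega)$ is therefore non-empty and compact. For measurability of $\omega\mapsto\mathcal{A}(\omega)$, I would replace the uncountable union and intersection by countable dense rational approximations, using the right-continuity of $t\mapsto\phi(t,\omega,x)$ together with spatial continuity to show the resulting countable expression agrees with $\mathcal{A}(\omega)$. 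Invariance $\phi(t,\omega,\mathcal{A}(\omega))=\mathcal{A}(\theta_t\omega)$ then falls out of the cocycle identity by a standard diagonal argument: any element of $\mathcal{A}(\theta_t\omega)$ is a limit of orbit points $\phi(t_n,\theta_{-t_n+t}\omega,x_n)$ with $t_n\to\infty$, and rewriting these via the cocycle identifies them as $\phi(t,\omega,\cdot)$ images of points in $\mathcal{A}(\omega)$. Pullback attraction of any family $D$ in the universe is then the usual contradiction argument: absorption into $B(\omega)$ provides precompactness of pullback orbits, and any limit point must lie in $\mathcal{A}(\omega)$ by construction.

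For the stationarity statement, once $\mathcal{A}(\omega)=\{X^*(\omega)\}$, strict invariance $\phi(t,\omega,\mathcal{A}(\omega))=\mathcal{A}(\theta_t\omega)$ collapses to $\phi(t,\omega,X^*(\omega))=X^*(\theta_t\omega)$. Defining the stochastic process by $X^*_t(\omega):=X^*(\theta_t\omega)$, the $\theta$-invariance of $\mu_L$ immediately yields that the finite-dimensional distributions of $(X^*_t)_{t\in\mathbb{R}}$ are invariant under time translation. The main obstacle I anticipate is the measurability step in the càdlàg setting, since textbook proofs typically assume continuous dependence on time; I would need to verify carefully that right-continuity of $t\mapsto\phi(t,\omega,x)$ is enough to validate countable-time approximations of $\mathcal{A}(\omega)$ and to justify the extraction of convergent subsequences used in the invariance and attraction arguments. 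Everything else is essentially a transcription of the Crauel--Flandoli proof, using the hypotheses of Lemma~\ref{attractor} only via the compactness of $B(\omega)$ and the spatial continuity of $\phi$.
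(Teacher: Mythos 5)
The paper gives no proof of this lemma: it is quoted verbatim from the cited reference \cite{LDLK1}, which in turn adapts the standard Crauel--Flandoli existence theorem for pullback attractors to the c\`adl\`ag setting. Your sketch reproduces exactly that argument --- $\omega$-limit set of the absorbing family, nested compacta for non-emptiness, countable approximation for measurability, cocycle identity for invariance, and $\theta$-invariance of the measure for stationarity of $t\mapsto X^*(\theta_t\omega)$ --- so it is the same approach as the source the paper relies on, and it is sound.
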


\section{Synchronization of Two Solutions}
Consider the coupled RODEs system \eqref{1.5}
\begin{equation}
\frac{dx^{(j)}}{dt_+}=F^{(j)}(x^{(j)}, \bar{X}_t^{(j)})
+\lambda(x^{(j-1)}-2x^{(j)}+x^{(j+1)}), \ j=1, \cdots, N,
\label{3.1}\end{equation} with initial data
\begin{equation}
x^{(j)}(0, \omega)=x^{(j)}_0(\omega)\in \mathbb{R}^d, \ \omega\in
\Omega, \ j=1, \cdots, N, \label{3.2}\end{equation} where
$\lambda>0$, and
\begin{equation}
F^{(j)}(x^{(j)}, \bar{X}_t^{(j)}):=f^{(j)}(x^{(j)}
+\bar{X}_t^{(j)})+x^{(j)}+\bar{X}_t^{(j)}, \ j=1, \cdots, N.
\label{3.3}\end{equation} Here $f^{(j)}$ are regular enough to
ensure the existence and uniqueness of global solutions on
$\mathbb{R}$ and satisfy the one-sided dissipative Lipschitz
condition \eqref{1.2} and integrability condition \eqref{1.3} for
$j=1, \cdots, N$.

First, we have the result of existence of stationary solutions.

\begin{lemma} \label{existence}
Supposed the assumptions \eqref{1.2} and \eqref{1.3} be satisfied.
Then the coupled RODEs system \eqref{3.1} with initial condition
\eqref{3.2} has a unique stationary solution.
\end{lemma}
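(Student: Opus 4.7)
The plan is to exhibit the unique stationary solution as the singleton random attractor of the c\`adl\`ag RDS generated by \eqref{3.1}, following the strategy of \cite{LDLK1} for $N=2$ but taking advantage of the cyclic structure of the coupling. First I would check that, since the driving path $t\mapsto(\bar X_t^{(1)},\ldots,\bar X_t^{(N)})$ is c\`adl\`ag and the right-hand side of \eqref{3.1} is locally Lipschitz in $x=(x^{(1)},\ldots,x^{(N)})$, the system defines a c\`adl\`ag RDS $\phi$ on $\mathbb{R}^{Nd}$ once global existence is secured by the energy estimate described next.

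The core a priori estimate is for $V(t):=\sum_{j=1}^N\|x^{(j)}(t)\|^2$. The key observation is the cyclic telescoping identity
\[
\sum_{j=1}^N\bigl\langle x^{(j)},\,x^{(j-1)}-2x^{(j)}+x^{(j+1)}\bigr\rangle \;=\; -\sum_{j=1}^N\|x^{(j+1)}-x^{(j)}\|^2 \;\le\; 0,
\]
which makes the coupling term vanish irrespective of $\lambda$. The remaining inner products $\langle x^{(j)},F^{(j)}(x^{(j)},\bar X_t^{(j)})\rangle$ can be controlled by applying \eqref{1.2} to the pair $(x^{(j)}+\bar X_t^{(j)},\bar X_t^{(j)})$, writing $x^{(j)}=(x^{(j)}+\bar X_t^{(j)})-\bar X_t^{(j)}$, and invoking Young's inequality to produce a differential inequality of the form
\[
\tfrac12\tfrac{d}{dt_+}V(t)\;\le\;-\eta\,V(t)+C\sum_{j=1}^N\bigl(\|\bar X_t^{(j)}\|^2+|f^{(j)}(\bar X_t^{(j)})|^2\bigr)
\]
for some $\eta>0$ (available thanks to the slack $l>4>1$). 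Lemma \ref{Gronwall} combined with pullback integration, Lemma \ref{property}(A)--(B) and the integrability assumption \eqref{1.3} applied to the sub-exponential path $s\mapsto\bar X_s^{(j)}$, then yield a tempered random absorbing ball $B(\omega)\subset\mathbb{R}^{Nd}$, and Lemma \ref{attractor} produces a random attractor $\mathcal{A}(\omega)$.

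For uniqueness (singleton property), for any two solutions $x,y$ of \eqref{3.1} the difference $z^{(j)}:=x^{(j)}-y^{(j)}$ satisfies, by the same cyclic telescoping and by \eqref{1.2} applied to $(x^{(j)}+\bar X_t^{(j)},y^{(j)}+\bar X_t^{(j)})$,
\[
\tfrac12\tfrac{d}{dt_+}\sum_{j=1}^N\|z^{(j)}\|^2 \;\le\; -(l-1)\sum_{j=1}^N\|z^{(j)}\|^2,
\]
so Lemma \ref{Gronwall} gives pathwise exponential contraction at rate $e^{-(l-1)t}$. This forces $\mathcal{A}(\omega)=\{X^*(\omega)\}$ for some random point $X^*$, and the final assertion of Lemma \ref{attractor} identifies $t\mapsto X^*(\theta_t\omega)$ as the unique stationary solution of \eqref{3.1}--\eqref{3.2}. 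I expect the main obstacle to lie in the absorbing-set step: one must carefully balance the forcing generated by $\bar X_t^{(j)}$ so that $\eta$ stays strictly positive, and verify temperedness of the random radius by combining \eqref{1.3} with the pathwise bounds of Lemma \ref{property}. The contraction argument leading to the singleton attractor, by contrast, is essentially immediate once the cyclic telescoping is in place.
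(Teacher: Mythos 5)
Your argument is correct, but it takes a genuinely different technical route from the paper's, and it also covers more ground than the paper's own proof of this lemma does. For the contraction step, the paper keeps the $N$ componentwise energies separate, derives the vector differential inequality $\mathbf{\dot{x}}(t)\leq D_{\lambda}\mathbf{x}(t)$ with the circulant matrix $D_{\lambda}$, and then bounds the largest eigenvalue of $\int_0^t D_{\lambda}\,ds$ via its quadratic form to obtain decay of order $e^{-lt}$; you instead sum the energies first, so that the cyclic telescoping identity $\sum_{j}\langle x^{(j)},\,x^{(j-1)}-2x^{(j)}+x^{(j+1)}\rangle=-\sum_{j}\|x^{(j+1)}-x^{(j)}\|^2\le 0$ eliminates the coupling term outright and reduces everything to a scalar Gronwall inequality with strictly negative rate (your constant $-(l-1)$ checks out, since \eqref{1.2} gives $-l$ and the extra $+x^{(j)}$ in $F^{(j)}$ gives $+1$). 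Your version is shorter and avoids the matrix-exponential and eigenvalue machinery entirely, at the cost of discarding the componentwise information; the paper's vector formulation is precisely what it reuses in Section 4 (Lemma \ref{sys2}), where the individual component differences must be tracked. There is also a difference of scope: the paper's proof of this lemma establishes only the pairwise pathwise convergence of solutions and defers the absorbing set, the singleton attractor, and the identification of the stationary solution to Theorem \ref{sys1}, whereas you carry out the full program (absorbing ball via the $V$-estimate and \eqref{1.3}, Lemma \ref{attractor}, singleton attractor, stationarity) inside the lemma itself --- which is in fact what the statement as written requires. Both routes are sound.
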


\begin{proof}
For any two solutions $(x_1^{(1)}(t), x_1^{(2)}(t), \cdots,
x_1^{(N)}(t))^\mathbf{T}$ and $(x_2^{(1)}(t), x_2^{(2)}(t),
\\\cdots, x_2^{(N)}(t))^\mathbf{T}$ of RODEs system
\eqref{3.1}-\eqref{3.2}. By the dissipative Lipschitz condition
\eqref{1.2}, for $j=1, \cdots, N$, we have
\begin{eqnarray}
\frac{d}{dt_+}\|x_1^{(j)}(t)-x_2^{(j)}(t)\|^2&=&2\langle
x_1^{(j)}(t)-x_2^{(j)}(t), \frac{d}{dt_+}x_1^{(j)}(t)
-\frac{d}{dt_+}x_2^{(j)}(t) \rangle\nonumber\\
&=&2\langle f^{(j)}(x_1^{(j)}+\bar{X}_t^{(j)})-f^{(j)}(x_2^{(j)}
+\bar{X}_t^{(j)}), x_1^{(j)}(t)-x_2^{(j)}(t)\rangle\nonumber\\
&&+(2-4\lambda)\|x_1^{(j)}(t)-x_2^{(j)}(t)\|^2\nonumber\\
&&+2\lambda \langle x_1^{(j-1)}(t)-x_2^{(j-1)}(t), x_1^{(j)}(t)
-x_2^{(j)}(t)\rangle\nonumber\\
&&+2\lambda \langle x_1^{(j+1)}(t)-x_2^{(j+1)}(t), x_1^{(j)}(t)
-x_2^{(j)}(t)\rangle\nonumber\\
&\leq& (2-2l-2\lambda)\|x_1^{(j)}(t)-x_2^{(j)}(t)\|^2\nonumber\\
&&+\lambda \|x_1^{(j-1)}(t)-x_2^{(j-1)}(t)\|^2\nonumber\\
&&+\lambda \|x_1^{(j+1)}(t)-x_2^{(j+1)}(t)\|^2.\label{3.4}
\end{eqnarray}
Define for $t\in \mathbb{R}$,
$$\mathbf{x}(t)=(\|x_1^{(1)}(t)-x_2^{(1)}(t)\|^2, \|x_1^{(2)}(t)
-x_2^{(2)}(t)\|^2, \cdots, \|x_1^{(N)}(t)
-x_2^{(N)}(t)\|^2)^{\mathbf{T}},$$ and
$$D_{\lambda}= \left ( \begin{array}{cccccc}
     2-2l-2\lambda &\lambda  &0  &\cdots &0 &\lambda \\
    \lambda & 2-2l-2\lambda & \lambda &0 &\cdots & 0\\
    0&\lambda&2-2l-2\lambda &\ddots &\ddots & \vdots\\
    \vdots& \ddots& \ddots & \ddots &\lambda &0\\
    0& \cdots & 0 & \lambda &2-2l-2\lambda & \lambda\\
    \lambda & 0& \cdots & 0 & \lambda & 2-2l-2\lambda
\end{array} \right)_{N\times N}.$$
Thus, the differential inequalities can be written as a simple form
\begin{equation}
\mathbf{\dot{x}}(t)\leq D_{\lambda}\mathbf{x}(t),\
\mbox{-componentwise}.\label{3.5}\end{equation} By Lemma
\ref{Gronwall}, it yields from \eqref{3.5} that
\begin{equation}
\mathbf{x}(t)\leq\exp(\int_0^tD_{\lambda}ds) \mathbf{x}(0), \
\mbox{-componentwise}.\label{3.6}\end{equation}

Now, we firstly to estimate the upper bound of eigenvalues of the
real symmetric matrix $\int_0^tD_{\lambda}ds$. The quadratic from
satisfies
\begin{eqnarray*}
f(\zeta_1, \zeta_2, \cdots, \zeta_N)&=&\zeta^{\mathbf{T}}
(\int_0^tD_{\lambda}ds)\zeta\\
&=&(2-2l-2\lambda)t\sum_{j=1}^N\zeta_j^2+2\lambda t
\sum_{j=1}^N\zeta_j\zeta_{j-1}\\
&\leq & (2-l)t\sum_{j=1}^N\zeta_j^2-lt\sum_{j=1}^N\zeta_j^2,
\end{eqnarray*}
where $\zeta=(\zeta_1, \zeta_2, \cdots, \zeta_N)^{\mathbf{T}}\in
\mathbb{R}^N$ and $\zeta_0=\zeta_N$. Due to the Lipschitz constant
$l>4$, we have
$$f(\zeta_1, \zeta_2, \cdots, \zeta_N)\leq -lt\sum_{j=1}^N\zeta_j^2,$$
which implies that the quadratic form is negative definite and
eigenvalues of $\int_0^tD_{\lambda}ds$ satisfy
\begin{equation}
\max\{\mu^{(1)}_{\lambda}, \mu^{(2)}_{\lambda}, \cdots,
\mu^{(N)}_{\lambda}\}\leq -lt.\label{3.7}\end{equation}

Because of the real and symmetric properties of matrix
$\int_0^tD_{\lambda}ds$, for $j=1, \cdots, N$, we obtain
\begin{eqnarray}
\|\exp(\int_0^tD_{\lambda}ds)\mathbf{x}(0)\|^2&\leq&
\|\mathbf{x}(0)\|^2 \exp(2\max\{\mu^{(1)}_{\lambda},
\mu^{(2)}_{\lambda}, \cdots,
\mu^{(N)}_{\lambda}\})\nonumber\\
&\leq &\|\mathbf{x}(0)\|^2\exp(-2lt),\label{3.8}
\end{eqnarray}
which leads to
$$\lim_{t\rightarrow\infty}\|x_1^{(j)}(t)-x_2^{(j)}(t)\|=0,
\ j=1, \cdots, N,$$ that is, all solutions of the coupled RODEs
system \eqref{3.1}-\eqref{3.2} converge pathwise to each other as
time $t$ tends to infinity. The proof is finished.
\end{proof}

Now, we use the theory of random dynamical systems which generated
by SDEs driven by L\'evy motion to find what the solutions of
\eqref{3.1}-\eqref{3.2} will converge to. It is easy to see from
\cite{LDLK1} that the solution
$$\phi(t, \omega)=(x^{(1)}(t, \omega), x^{(2)}(t, \omega), \cdots,
x^{(N)}(t, \omega))^{\mathbf{T}}, \ \omega\in \Omega$$ of system
\eqref{3.1}-\eqref{3.2} generates a c\`adl\`ag RDS over $(\Omega,
\mathcal{F}, \mathbb{P}, (\theta_t)_{t\in \mathbb{R}})$ with state
space $\Omega\times\mathbb{R}^{Nd}$. The RDS $(\theta, \phi)$ is
continuous in space but c\`adl\`ag in time. Recall that a stationary
solution $X^*$ is a stationary solution of a stochastic differential
equation system may be characterized as a stationary orbit of the
corresponding RDS $(\theta, \phi)$ generated by the stochastic
differential equation system, namely, $\phi(t,
\omega)X^*(\omega)=X^*(\theta_t\omega)$.

Then, we have the result for this RDS.

\begin{theorem} \label{sys1}
Under the conditions \eqref{1.2} and \eqref{1.3}, the RDS $\phi(t,
\omega), t\in \mathbb{R}, \omega\in \Omega$, has a singleton sets
random attractor given by
$$\mathcal{A}_{\lambda}(\omega)=\{(\bar{x}_{\lambda}^{(1)}(\omega),
\bar{x}_{\lambda}^{(2)}(\omega), \cdots,
\bar{x}_{\lambda}^{(N)}(\omega))^{\mathbf{T}}\},$$ which implies the
synchronization of any two solutions of system
\eqref{3.1}-\eqref{3.2}. Furthermore,
$$(\bar{x}_{\lambda}^{(1)}(\theta_t\omega)+\bar{X}_t^{(1)},
\bar{x}_{\lambda}^{(2)}(\theta_t\omega)+\bar{X}_t^{(2)}, \cdots,
\bar{x}_{\lambda}^{(N)}(\theta_t\omega)+\bar{X}_t^{(N)})^{\mathbf{T}}$$
is the stationary stochastic solution of the equivalent coupled
SODEs \eqref{1.6}.
\end{theorem}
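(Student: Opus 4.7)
The plan is to apply the random-attractor criterion of Lemma \ref{attractor} to the c\`adl\`ag RDS $\phi$ generated by \eqref{3.1}--\eqref{3.2}, then collapse the resulting attractor to a single point using the pathwise contraction already established in Lemma \ref{existence}. Once the singleton attractor $\{(\bar{x}_\lambda^{(1)}(\omega),\ldots,\bar{x}_\lambda^{(N)}(\omega))^{\mathbf{T}}\}$ is produced, stationarity of $t\mapsto \bar{x}_\lambda^{(j)}(\theta_t\omega)$ is automatic from the last assertion of Lemma \ref{attractor}, and the claim about \eqref{1.6} follows by inverting the change of variable $x^{(j)}(t,\omega)=X_t^{(j)}-\bar{X}_t^{(j)}$ used to pass from \eqref{1.1} to \eqref{1.4}.

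First I would construct a compact absorbing family $B(\omega)\subset\mathbb{R}^{Nd}$ for $\phi$. Setting $\mathbf{x}(t)=(\|x^{(1)}(t)\|^2,\ldots,\|x^{(N)}(t)\|^2)^{\mathbf{T}}$ and differentiating componentwise along a single solution of \eqref{3.1}, a computation parallel to \eqref{3.4} --- with $f^{(j)}$-differences now taken between $f^{(j)}(x^{(j)}+\bar{X}_t^{(j)})$ and $f^{(j)}(\bar{X}_t^{(j)})$, and a Cauchy inequality applied to the coupling and remainder pairings --- yields a componentwise inequality
\begin{equation*}
\frac{d}{dt_+}\mathbf{x}(t)\leq D_\lambda\mathbf{x}(t)+\Psi(t,\omega),
\end{equation*}
where $\Psi(t,\omega)$ is built from $\|\bar{X}_t^{(j)}\|^2$ and $\|f^{(j)}(\bar{X}_t^{(j)})\|^2$ alone. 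Applying Lemma \ref{Gronwall}, using the spectral bound \eqref{3.7}, and pulling back via $\omega\mapsto\theta_{-t}\omega$, I obtain an estimate of the form
\begin{equation*}
\|\phi(t,\theta_{-t}\omega)\mathbf{x}_0\|\leq e^{-lt}\|\mathbf{x}_0\|+\int_{-t}^{0}e^{l\tau}\|\Psi(\tau,\omega)\|\,d\tau.
\end{equation*}
The integrability hypothesis \eqref{1.3}, combined with Lemma \ref{property} (which bounds the $\bar{X}_\tau^{(j)}$-process and guarantees sub-exponential growth as $\tau\to-\infty$), controls the right-hand integral by a finite random variable $R(\omega)$. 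The closed ball of radius $R(\omega)+1$ then serves as the required compact absorbing set, and Lemma \ref{attractor} produces a random attractor $\mathcal{A}_\lambda(\omega)$.

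To upgrade $\mathcal{A}_\lambda(\omega)$ to a singleton, I take arbitrary $\mathbf{a},\mathbf{b}\in\mathcal{A}_\lambda(\omega)$; invariance provides, for each $s>0$, preimages $\mathbf{a}_s,\mathbf{b}_s\in\mathcal{A}_\lambda(\theta_{-s}\omega)\subset B(\theta_{-s}\omega)$ with $\phi(s,\theta_{-s}\omega)\mathbf{a}_s=\mathbf{a}$ and analogously for $\mathbf{b}$. Feeding the two resulting solution trajectories over $[-s,0]$ into the contraction estimate \eqref{3.8} derived in the proof of Lemma \ref{existence} gives $\|\mathbf{a}-\mathbf{b}\|^2\leq e^{-2ls}\|\mathbf{a}_s-\mathbf{b}_s\|^2$, and the tempered boundedness of the diameter of $\mathcal{A}_\lambda(\theta_{-s}\omega)$ forces $\mathbf{a}=\mathbf{b}$ on sending $s\to\infty$. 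Hence $\mathcal{A}_\lambda(\omega)=\{(\bar{x}_\lambda^{(1)}(\omega),\ldots,\bar{x}_\lambda^{(N)}(\omega))^{\mathbf{T}}\}$, and the final sentence of Lemma \ref{attractor} promotes each coordinate $t\mapsto\bar{x}_\lambda^{(j)}(\theta_t\omega)$ to a stationary stochastic process.

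For the last claim, a stationary orbit of $\phi$ solves \eqref{3.1} in the pathwise sense, and by construction the process $X_t^{(j)}:=\bar{x}_\lambda^{(j)}(\theta_t\omega)+\bar{X}_t^{(j)}$ solves \eqref{1.6}, since the coupling correction $-\lambda(\bar{X}_t^{(j-1)}-2\bar{X}_t^{(j)}+\bar{X}_t^{(j+1)})$ in \eqref{1.6} was inserted precisely to undo the translation $x^{(j)}=X^{(j)}-\bar{X}^{(j)}$. Stationarity of $(X_t^{(j)})$ follows since both $t\mapsto \bar{x}_\lambda^{(j)}(\theta_t\omega)$ and the Ornstein--Uhlenbeck-type process $\bar{X}_t^{(j)}$ are $\theta$-equivariant over the $\theta$-invariant measure $\mu_L$. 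The step I expect to be the main obstacle is the first one: one must verify carefully that the coupling cross terms for a single solution can be absorbed into the diagonal of $D_\lambda$ in a manner that remains effective uniformly in $\lambda>0$, and that the forcing $\Psi(t,\omega)$ admits the required tempered integral under \eqref{1.3}. Once this a priori estimate is in place, the remainder of the argument is a routine application of the c\`adl\`ag cocycle/attractor machinery of Lemma \ref{attractor} and the contraction already recorded in \eqref{3.8}.
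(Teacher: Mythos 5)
Your proposal is correct and follows essentially the same route as the paper: a dissipativity estimate for a single solution giving a componentwise inequality with forcing built from $\|\bar{X}_t^{(j)}\|^2$ and $\|f^{(j)}(\bar{X}_t^{(j)})\|^2$, a pullback absorbing ball whose radius is finite by Lemma \ref{property} and \eqref{1.3}, Lemma \ref{attractor} to obtain the attractor, the contraction \eqref{3.8} from Lemma \ref{existence} to collapse it to a singleton, and the change of variable to recover the stationary solution of \eqref{1.6}. Your pullback argument for the singleton step is in fact spelled out more carefully than the paper's one-line appeal to pairwise convergence, but the underlying idea is identical.
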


\begin{proof} For $j=1, \cdots, N,$ we have
\begin{eqnarray*}
\frac{d}{dt_+}\|x^{(j)}(t)\|^2&=&2\langle x^{(j)}(t),
\frac{d}{dt_+}x^{(j)}(t)\rangle\\
&=& 2\langle f^{(j)}(x^{(j)}(t)+\bar{X}_t^{(j)}), x^{(j)}(t)
\rangle+2\langle x^{(j)}(t)+\bar{X}_t^{(j)}, x^{(j)}(t)\rangle\\
&&-4\lambda\|x^{(j)}(t)\|^2+2\lambda \langle x^{(j)}(t),
x^{(j-1)}(t)\rangle+2\lambda \langle x^{(j)}(t),
x^{(j+1)}(t)\rangle\\
&\leq&  2\langle f^{(j)}(x^{(j)}(t)+\bar{X}_t^{(j)})
-f^{(j)}(\bar{X}_t^{(j)}), x^{(j)}(t)\rangle+2\langle f^{(j)}
(\bar{X}_t^{(j)}),  x^{(j)}(t)\rangle\\
&&+(2-4\lambda)\|x^{(j)}(t)\|^2+2\langle \bar{X}_t^{(j)},
x^{(j)}(t)\rangle\\
&&+2\lambda \langle x^{(j)}(t), x^{(j-1)}(t)\rangle
+2\lambda \langle x^{(j)}(t), x^{(j+1)}(t)\rangle\\
&\leq& \|\bar{X}_t^{(j)}\|^2+|f^{(j)}(\bar{X}_t^{(j)})|^2
+(4-2l-2\lambda)\|x^{(j)}(t)\|^2\\
&&+\lambda\|x^{(j-1)}(t)\|^2+\lambda\|x^{(j+1)}(t)\|^2.
\end{eqnarray*}
Analogous to \eqref{3.5}, we get
$$\mathbf{\dot{y}}(t)\leq \tilde{D}_{\lambda}\mathbf{y}(t)
+\mathbf{g}(t),$$ where
$$\mathbf{y}(t)=(\|x^{(1)}(t)\|^2, \|x^{(2)}(t)\|^2, \cdots,
\|x^{(N)}(t)\|^2)^{\mathbf{T}}, \ t\in \mathbb{R},$$
\begin{eqnarray*}\mathbf{g}(t)&=&(|f^{(1)}(\bar{X}_t^{(1)})|^2
+\|\bar{X}_t^{(1)}\|^2, |f^{(2)}(\bar{X}_t^{(2)})|^2\\
&&\ \ \ \ \ \ \ \ \ \ \ +\|\bar{X}_t^{(2)}\|^2, \cdots,
|f^{(N)}(\bar{X}_t^{(N)})|^2+\|\bar{X}_t^{(N)}\|^2,)^{\mathbf{T}}, \
t\in \mathbb{R},\end{eqnarray*} and
$$\tilde{D}_{\lambda}= \left ( \begin{array}{cccccc}
     4-2l-2\lambda &\lambda  &0  &\cdots &0 &\lambda \\
    \lambda & 4-2l-2\lambda & \lambda &0 &\cdots & 0\\
    0&\lambda&4-2l-2\lambda &\ddots &\ddots & \vdots\\
    \vdots& \ddots& \ddots & \ddots &\lambda &0\\
    0& \cdots & 0 & \lambda &4-2l-2\lambda & \lambda\\
    \lambda & 0& \cdots & 0 & \lambda & 4-2l-2\lambda
\end{array} \right)_{N\times N}.$$
Then by Lemma \ref{Gronwall},
$$\mathbf{y}(t)\leq \exp(\int_{t_0}^t
\tilde{D}_{\lambda}ds)\mathbf{y}(t_0)+\int_{t_0}^t\exp(\int_{\tau}^t
\tilde{D}_{\lambda}ds)\mathbf{g}(\tau)d\tau, \ t\geq t_0.$$ Similar
to Lemma \ref{existence}, we have
$$\|\exp(\int_{t_0}^t\tilde{D}_{\lambda}ds)\mathbf{y}(t_0)\|
\leq \|\mathbf{y}(t_0)\|\exp(-l(t-t_0)),\ t\geq t_0.$$ Define
\begin{equation} \rho_{\lambda}(\omega):=\int_{-\infty}^0
\exp(\int_{\tau}^0\tilde{D}_{\lambda}ds)\mathbf{g}(\tau)d\tau,
\label{3.9}\end{equation} and
\begin{equation}
R_{\lambda}^2(\omega)=1+\|\rho_{\lambda}(\omega)\|^2,
\label{3.10}\end{equation} and let $\mathbb{B}_{\lambda}$ be a
random ball in $\mathbb{R}^{Nd}$ centered at the origin with radius
$R_{\lambda}(\omega)$. Obviously, the infinite integral on the
right-hand side of \eqref{3.9} is well-defined by Lemma
\ref{property} and the integrability condition \eqref{1.3}. Hence by
Lemma 2.3, the coupled system has a random attractor
$\mathcal{A}_{\lambda}=\{\mathcal{A}_{\lambda}(\omega), \omega\in
\Omega\}$ with $\mathcal{A}_{\lambda}(\omega)\subset
\mathbb{B}_{\lambda}$. By Lemma \ref{existence}, all solutions of
\eqref{3.1}-\eqref{3.2} converge pathwise to each other, therefore,
$\mathcal{A}_{\lambda}(\omega)$ consists of singleton sets, that is
$$\mathcal{A}_{\lambda}(\omega)=\{(\bar{x}_{\lambda}^{(1)}(\omega),
\bar{x}_{\lambda}^{(2)}(\omega), \cdots, \bar{x}_{\lambda}^{(N)}
(\omega))^{\mathbf{T}}\}.$$

We transform the coupled RODEs \eqref{3.1} back to the coupled SODEs
\eqref{1.6}, the corresponding pathwise singleton sets attractor is
then equal to
$$(\bar{x}_{\lambda}^{(1)}(\theta_t\omega)+\bar{X}_t^{(1)},
\bar{x}_{\lambda}^{(2)}(\theta_t\omega)+\bar{X}_t^{(2)}, \cdots,
\bar{x}_{\lambda}^{(N)}(\theta_t\omega)+\bar{X}_t^{(N)})^{\mathbf{T}},$$
which is exactly a stationary stochastic solution of the coupled
SODEs \eqref{1.6} because the Ornstein-Uhlenbeck process is
stationary.
\end{proof}

\section{Synchronization of Components of Solutions}
It is known in Section 3 that all solutions of the coupled RODEs
system \eqref{3.1}-\eqref{3.2} converge pathwise to each other in
the future for a fixed positive coupling coefficient $\lambda$.
Here, we would like to discuss what will happen to solutions of the
coupled RODEs system \eqref{3.1}-\eqref{3.2} as
$\lambda\rightarrow\infty$. First, we will give some lemmas which
play an important role in this section.

We need the following estimations. Suppose that
$(x_{\lambda}^{(1)}(t), x_{\lambda}^{(2)}(t), \cdots,
x_{\lambda}^{(N)}(t))^{\mathbf{T}}$  is a solution of the coupled
RODEs system \eqref{3.1}-\eqref{3.2}. For any two different
components $x_{\lambda}^{(j)}(t), x_{\lambda}^{(k)}(t)$ of the
solution for $\forall j, k\in \{1, 2, \ldots, N\}$,
\begin{eqnarray*}
d^{k, j}_{\lambda}(t)&=&2\langle x_{\lambda}^{(j)}(t)
-x_{\lambda}^{(k)}(t), F^{(j)}(x_{\lambda}^{(j)},
\bar{X}_t^{(j)})-F^{(k)}(x_{\lambda}^{(k)}, \bar{X}_t^{(k)})\rangle\\
&=& 2\langle x_{\lambda}^{(j)}(t)-x_{\lambda}^{(k)}(t),
f^{(j)}(x_{\lambda}^{(j)}+\bar{X}_t^{(j)})
-f^{(k)}(x_{\lambda}^{(k)}+\bar{X}_t^{(k)})\rangle\\
&&+2\|x_{\lambda}^{(j)}(t)-x_{\lambda}^{(k)}(t)\|^2 +2\langle
x_{\lambda}^{(j)}(t)-x_{\lambda}^{(k)}(t),
\bar{X}_t^{(j)}-\bar{X}_t^{(k)}\rangle\\
&\leq& -2l(\|x_{\lambda}^{(j)}(t)\|^2-\|x_{\lambda}^{(k)}(t)\|^2)
+2\|x_{\lambda}^{(j)}(t)-x_{\lambda}^{(k)}(t)\|^2\\
&&+2\langle  f^{(j)}(\bar{X}_t^{(j)})-f^{(k)}(\bar{X}_t^{(k)}),
 x_{\lambda}^{(j)}(t)-x_{\lambda}^{(k)}(t)\rangle\\
&&+2\langle x_{\lambda}^{(j)}(t)-x_{\lambda}^{(k)}(t),
\bar{X}_t^{(j)}-\bar{X}_t^{(k)}\rangle\\
&\leq & 2\|x_{\lambda}^{(j)}(t)-x_{\lambda}^{(k)}(t)\|
(\|f^{(j)}(\bar{X}_t^{(j)})\|+|\bar{X}_t^{(j)}|)\\
&&+2\|x_{\lambda}^{(j)}(t)-x_{\lambda}^{(k)}(t)
\|(\|f^{(k)}(\bar{X}_t^{(j)})\|+|\bar{X}_t^{(k)}|),
\end{eqnarray*}
thus, for fixed $\alpha>0$, we have
\begin{eqnarray*}
&&-\alpha \lambda \|x_{\lambda}^{(j)}(t)-x_{\lambda}^{(k)}(t)\|^2
+d^{k, j}_{\lambda}(t)\\
&\leq& \frac{1}{\lambda}
(\frac{4}{\alpha}\|f^{(j)}(\bar{X}_t^{(j)})\|^2)
+\frac{4}{\alpha}|\bar{X}_t^{(j)}|^2)+\frac{1}{\lambda}
(\frac{4}{\alpha}\|f^{(k)}(\bar{X}_t^{(k)})\|^2)+\frac{4}{\alpha}
|\bar{X}_t^{(k)}|^2).
\end{eqnarray*}
Let
$$C^{j, k, \alpha}_{T_1, T_2}(\lambda, \omega)=\frac{4}{\alpha}
\sup_{t\in [T_1,
T_2]}[(\|f^{(j)}(\bar{X}_t^{(j)})\|^2+|\bar{X}_t^{(j)}|^2)
+(\|f^{(k)}(\bar{X}_t^{(k)})\|^2+|\bar{X}_t^{(k)}|^2)]$$ in any
bounded interval $[T_1, T_2]$. Note that $\rho_{\lambda}(\omega)$ in
\eqref{3.9} satisfies
$$\frac{d}{d\lambda}\|\rho_{\lambda}(\omega)\|^2=2\langle
\rho_{\lambda}(\omega),
\frac{d}{d\lambda}\rho_{\lambda}(\omega)\rangle\leq 0,$$ and
consequently, $\rho_{\lambda}(\omega)\leq \rho_{1}(\omega)$ for
$\lambda\geq 1$. Hence, $C^{j, k, \alpha}_{T_1, T_2}(\lambda,
\omega)$ is uniformly bounded in $\lambda$ and
\begin{equation}-\alpha \lambda \|x_{\lambda}^{(j)}(t)
-x_{\lambda}^{(k)}(t)\|^2+d^{k, j}_{\lambda}(t)\leq
\frac{1}{\lambda}C^{j, k, \alpha}_{T_1, T_2}(\lambda, \omega)
\label{4.1}\end{equation} uniformly for $t\in [T_1, T_2]$ with
$$C^{j, k, \alpha}_{T_1, T_2}(\omega)=\sup_{\lambda\geq 1}
C^{j, k, \alpha}_{T_1, T_2}(\lambda, \omega).$$

Now let us estimate the difference between any two components of a
solution of the coupled RODEs system \eqref{3.1}-\eqref{3.2} as
$\lambda\rightarrow\infty$.

\begin{lemma} \label{sys2}
Provided conditions \eqref{1.2} and \eqref{1.3} are satisfied, then
any two components of a solution $(x_{\lambda}^{(1)}(t),
x_{\lambda}^{(2)}(t), \cdots, x_{\lambda}^{(N)}(t))^{\mathbf{T}}$ of
the coupled RODEs system \eqref{3.1}-\eqref{3.2} uniformly vanish in
any bounded time interval when the coupling coefficient
$\lambda\rightarrow\infty$, that is, for any bounded interval $[T_1,
T_2]$ and $\forall t\in [T_1, T_2]$, it yields
$$\lim_{\lambda\rightarrow\infty}\|x_{\lambda}^{(j)}(t)
-x_{\lambda}^{(k)}(t)\|=0, \ \ \forall j, k\in \{1, 2, \ldots,
N\}.$$
\end{lemma}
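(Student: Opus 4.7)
The plan is to control the total consecutive-pair gap
\[
V(t) := \sum_{j=1}^N \|x_\lambda^{(j)}(t) - x_\lambda^{(j+1)}(t)\|^2
\]
(indices taken mod $N$), derive for it a one-sided differential inequality of the form $\frac{d}{dt_+} V(t) \leq -c_N \lambda V(t) + K(\omega)/\lambda$, apply the Gronwall-type Lemma \ref{Gronwall}, and finally recover all pairwise differences via the triangle inequality. The crucial input is the estimate \eqref{4.1}, which controls the nonlinear bracket $d^{j+1,j}_\lambda(t)$ in terms of $\lambda\|x_\lambda^{(j)} - x_\lambda^{(j+1)}\|^2$ up to an $O(1/\lambda)$ error.

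Setting $z^{(j)}(t) := x_\lambda^{(j)}(t) - x_\lambda^{(j+1)}(t)$, the coupled RODE \eqref{3.1} gives $\frac{d}{dt_+} z^{(j)} = (F^{(j)} - F^{(j+1)}) + \lambda(z^{(j-1)} - 2z^{(j)} + z^{(j+1)})$, so that
$$\frac{d}{dt_+}\|z^{(j)}(t)\|^2 = d^{j+1,j}_\lambda(t) + 2\lambda\langle z^{(j)}, z^{(j-1)} - 2z^{(j)} + z^{(j+1)}\rangle.$$
Summing over $j$ and performing discrete summation by parts on the cyclic Laplacian yields
$$\frac{d}{dt_+} V(t) = \sum_{j=1}^N d^{j+1,j}_\lambda(t) - 2\lambda\sum_{j=1}^N \|z^{(j+1)}(t) - z^{(j)}(t)\|^2.$$

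The decisive observation is that $\sum_{j=1}^N z^{(j)} = 0$ by telescoping, so the vector $(z^{(1)}, \ldots, z^{(N)})$ lies in the orthogonal complement of the kernel of the cyclic discrete Laplacian; hence the discrete Poincar\'e/Wirtinger inequality
$$\sum_{j=1}^N \|z^{(j+1)} - z^{(j)}\|^2 \geq c_N V(t), \qquad c_N := 2\bigl(1 - \cos(2\pi/N)\bigr) > 0$$
holds, $c_N$ being the smallest positive eigenvalue of the cyclic Laplacian. Applying \eqref{4.1} to each pair $(j, j+1)$ with the choice $\alpha := c_N$ gives $\sum_{j=1}^N d^{j+1,j}_\lambda(t) \leq c_N \lambda V(t) + K(\omega)/\lambda$, where $K(\omega) := \sum_{j=1}^N C^{j,j+1,c_N}_{T_1,T_2}(\omega) < \infty$. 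Combining everything, $\frac{d}{dt_+} V(t) \leq -c_N \lambda V(t) + K(\omega)/\lambda$, and the scalar case of Lemma \ref{Gronwall} produces
$$V(t) \leq V(0)\, e^{-c_N \lambda t} + \frac{K(\omega)}{c_N \lambda^2}, \qquad t \geq 0.$$
For $t$ in any sub-interval $[T_1, T_2]$ of $(0, \infty)$, both terms on the right tend to zero as $\lambda \to \infty$, uniformly in $t$, and the triangle inequality $\|x_\lambda^{(j)}(t) - x_\lambda^{(k)}(t)\| \leq \sqrt{|k - j|}\,\sqrt{V(t)}$ then extends the conclusion from consecutive pairs to arbitrary $j, k$.

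The main obstacle is the discrete Poincar\'e step. Applying Cauchy--Schwarz to the coupling cross-terms pointwise in $j$ would leave behind $+\lambda(\|z^{(j-1)}\|^2 + \|z^{(j+1)}\|^2)$ contributions which, after summation, overwhelm the $-2\lambda\|z^{(j)}\|^2$ term and destroy the desired dissipation. One must first sum and then use summation by parts together with the zero-mean constraint on the $z^{(j)}$'s; otherwise no sign-definite cancellation can be extracted from the Laplacian, and the choice of $\alpha$ in \eqref{4.1} cannot be balanced against the coupling.
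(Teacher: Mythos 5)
Your proof is correct and reaches the same conclusion as the paper, but by a genuinely different route. The paper estimates a specific zig-zag chain of pairwise differences $(x^{(1)}-x^{(2)})$, $(x^{(3)}-x^{(N)})$, $(x^{(4)}-x^{(N-1)})$, \dots, observes that the cyclic coupling links each such difference only to its predecessor and successor in the chain, and so arrives at a tridiagonal system \eqref{4.3} (resp.\ \eqref{4.5}); negative definiteness of $\mathbf{H}_{\lambda}$ is then imported from Lemma 4.1 of \cite{SZH}, the argument splits into the cases $N$ even and $N$ odd, and the remaining adjacent pairs are handled by repeating the construction after a cyclic relabelling. You instead sum $\|z^{(j)}\|^2$ over \emph{all} consecutive differences $z^{(j)}=x_\lambda^{(j)}-x_\lambda^{(j+1)}$, exploit the exact discrete summation by parts $\sum_j\langle z^{(j)},z^{(j-1)}-2z^{(j)}+z^{(j+1)}\rangle=-\sum_j\|z^{(j+1)}-z^{(j)}\|^2$, and recover coercivity from the telescoping identity $\sum_j z^{(j)}=0$ together with the spectral gap $2(1-\cos(2\pi/N))$ of the cycle Laplacian on mean-zero vectors. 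This buys several things: no parity case distinction, no appeal to the external tridiagonal eigenvalue lemma, all adjacent pairs treated simultaneously, and an explicit rate $V(t)=O(\lambda^{-2})$ away from the initial time; your closing remark correctly diagnoses why a pointwise Cauchy--Schwarz on the coupling terms (which is essentially what forces the paper into its chain construction) yields no net dissipation when summed. Two caveats, both shared with the paper's own proof rather than specific to yours: the transient term $V(0)e^{-c_N\lambda t}$ does not vanish at the initial time itself, so the uniform convergence really holds on intervals bounded away from the starting time (or for entire solutions such as those on the attractor, which is how the lemma is used in Theorem 4.3); and your argument, like the paper's, takes the preparatory estimate \eqref{4.1} as given. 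Within that framework your derivation is sound, and the final passage from consecutive to arbitrary pairs via $\|x_\lambda^{(j)}-x_\lambda^{(k)}\|\le\sqrt{|k-j|}\,\sqrt{V(t)}$ is a clean replacement for the paper's ``repeat the process for other adjacent components.''
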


\begin{proof}
To prove the result, we can equivalently estimate the difference
between any two adjacent components only because the first and the
last components of the solution are considered to be adjacent. We
will notice that only one new term appears in each step which
continuous the process, except the last step that ends the process.

For the difference of the first part of the solution
$(x_{\lambda}^{(1)}(t), x_{\lambda}^{(2)}(t), \cdots,
x_{\lambda}^{(N)}(t))^{\mathbf{T}}$,
\begin{eqnarray}
\frac{d}{dt_+}\|x_{\lambda}^{(1)}(t)-x_{\lambda}^{(2)}(t)\|^2
&=&2\langle x_{\lambda}^{(1)}(t)-x_{\lambda}^{(2)}(t),
F^{(1)}(x^{(1)},
\bar{X}_t^{(1)})-F^{(2)}(x^{(2)}, \bar{X}_t^{(2)})\rangle\nonumber\\
&&-6\lambda \|x_{\lambda}^{(1)}(t)-x_{\lambda}^{(2)}(t)\|^2\nonumber\\
&&+2\lambda \langle x_{\lambda}^{(1)}(t)-x_{\lambda}^{(2)}(t),
x_{\lambda}^{(N)}(t)-x_{\lambda}^{(3)}(t)\rangle\nonumber\\
&\leq& -5\|x_{\lambda}^{(1)}(t)-x_{\lambda}^{(2)}(t)\|^2 +\lambda
\|x_{\lambda}^{(N)}(t)-x_{\lambda}^{(3)}(t)\|^2
+d^{1, 2}_{\lambda}(t)\nonumber\\
&\leq& -\beta\lambda\|x_{\lambda}^{(1)}(t)-x_{\lambda}^{(2)}(t)\|^2
+\lambda \|x_{\lambda}^{(N)}(t)-x_{\lambda}^{(3)}(t)\|^2\nonumber\\
&&+\frac{1}{\lambda}C^{1, 2, 5-\beta}_{T_1, T_2}(\omega)\label{4.2}
\end{eqnarray}
uniformly for $t\in [T_1, T_2]$ by \eqref{4.1}. Here, we can take
$$\beta=
\begin{array}{l}\begin{cases}
1-\cos\frac{N\pi}{N+2}, &\mbox{N is even},\\
1-\cos\frac{(N-1)\pi}{N+1}, &\mbox{N is odd}.
\end{cases}\end{array}
$$
In fact, from Lemma 4.1 in \cite{SZH}, we can take any $\beta\in
(-2\cos\frac{N\pi}{N+2}, 2)$ when $N$ is even and any $\beta\in
(-2\cos\frac{(N-1)\pi}{N+1}, 2)$ when $N$ is odd.

We have seen that the estimations in \eqref{4.2} generate
$x_{\lambda}^{(3)}(t)-x_{\lambda}^{(N)}(t)$. Now, we have
\begin{eqnarray*}
\frac{d}{dt_+} \|x_{\lambda}^{(3)}(t)-x_{\lambda}^{(N)}(t)\|^2&=&
2\langle x_{\lambda}^{(3)}(t)-x_{\lambda}^{(N)}(t), F^{(3)}(x^{(3)},
\bar{X}_t^{(3)})-F^{(N)}(x^{(N)}, \bar{X}_t^{(N)})\rangle\\
&&-4\lambda  \|x_{\lambda}^{(3)}(t)-x_{\lambda}^{(N)}(t)\|^2\\
&&+2\lambda \langle x_{\lambda}^{(3)}(t)-x_{\lambda}^{(N)}(t),
x_{\lambda}^{(2)}(t)-x_{\lambda}^{(1)}(t)\rangle\\
&&+2\lambda \langle  x_{\lambda}^{(3)}(t)-x_{\lambda}^{(N)}(t),
 x_{\lambda}^{(4)}(t)-x_{\lambda}^{(N-1)}(t)\rangle\\
&\leq&  -\beta\lambda\|x_{\lambda}^{(3)}(t)-x_{\lambda}^{(N)}(t)\|^2
+\lambda \|x_{\lambda}^{(1)}(t)-x_{\lambda}^{(2)}(t)\|^2\nonumber\\
&&+ \lambda\|x_{\lambda}^{(4)}(t)-x_{\lambda}^{(N-1)}(t)\|^2
+\frac{1}{\lambda}C^{3, N, 2-\beta}_{T_1, T_2}(\omega)
\end{eqnarray*}
uniformly for $t\in [T_1, T_2]$.

Note that $x_{\lambda}^{(1)}(t)-x_{\lambda}^{(2)}(t)$ has been fixed
and $x_{\lambda}^{(4)}(t)-x_{\lambda}^{(N-1)}(t)$ is generated.
Similarly, it yields
\begin{eqnarray*}
\frac{d}{dt_+}
\|x_{\lambda}^{(4)}(t)-x_{\lambda}^{(N-1)}(t)\|^2&\leq&
-\beta\lambda\|x_{\lambda}^{(4)}(t)-x_{\lambda}^{(N-1)}(t)\|^2+\lambda
\|x_{\lambda}^{(3)}(t)-x_{\lambda}^{(N)}(t)\|^2\nonumber\\
&&+ \lambda\|x_{\lambda}^{(5)}(t)-x_{\lambda}^{(N-2)}(t)\|^2
+\frac{1}{\lambda}C^{4, N-1, 2-\beta}_{T_1, T_2}(\omega)
\end{eqnarray*}
uniformly for $t\in [T_1, T_2]$.

Continue such estimations, for $j=2, 3, \ldots$, we get
\begin{eqnarray*} \frac{d}{dt_+} \|x_{\lambda}^{(j+3)}(t)
-x_{\lambda}^{(N-j)}(t)\|^2&\leq&
-\beta\lambda\|x_{\lambda}^{(j+3)}(t)
-x_{\lambda}^{(N-j)}(t)\|^2\\
&&+\lambda \|x_{\lambda}^{(j+2)}(t)
-x_{\lambda}^{(N-j+1)}(t)\|^2\nonumber\\
&&+ \lambda\|x_{\lambda}^{(j+4)}(t)-x_{\lambda}^{(N-j-1)}(t)\|^2
+\frac{1}{\lambda}C^{j+3, N-j, 2-\beta}_{T_1, T_2}(\omega)
\end{eqnarray*}
uniformly for $t\in [T_1, T_2]$.

We can divide the situation into two cases: $N$ is even and $N$ is
odd, which just as same as \cite{SZH} did. When $N$ is even, we can
rewrite the inequalities in the matrix form
\begin{equation}
\mathbf{\dot{u}}(t)\leq \mathbf{H}_{\lambda}\mathbf{u}(t)
+\frac{1}{\lambda}\mathbf{C}, \label{4.3}\end{equation} which
uniformly for $t\in [T_1, T_2]$, where for $t\in \mathbb{R}$,
$$\mathbf{u}(t)=(\|x_{\lambda}^{(1)}(t)-x_{\lambda}^{(2)}(t)\|^2,
\|x_{\lambda}^{(3)}(t)-x_{\lambda}^{(N)}(t)\|^2, \cdots,
\|x_{\lambda}^{(\frac{N}{2}+1)}(t)-x_{\lambda}^{(\frac{N}{2}+2)}(t)
\|^2)^{\mathbf{T}},$$
$$\mathbf{C}=(C^{1, 2, 5-\beta}_{T_1, T_2}(\omega), C^{3, N,
2-\beta}_{T_1, T_2}(\omega), \cdots, C^{\frac{N}{2}, \frac{N}{2}+3,
2-\beta}_{T_1, T_2}(\omega), C^{\frac{N}{2}+1, \frac{N}{2}+2,
5-\beta}_{T_1, T_2}(\omega))^{\mathbf{T}},$$ are
$\frac{N}{2}$-dimensional vectors, and
$$\mathbf{H}_{\lambda}= \left ( \begin{array}{cccccc}
     -\beta\lambda &\lambda  &0  &\cdots &0  \\
    \lambda & -\beta\lambda & \lambda  &\ddots & \vdots\\
    0&\lambda&\ddots &\ddots &0\\
    \vdots& \ddots& \ddots &-\beta\lambda &\lambda\\
    0& \cdots & 0 & \lambda &-\beta\lambda
\end{array} \right)_{\frac{N}{2}\times \frac{N}{2}}.$$
By Lemma \ref{Gronwall}, it follows from \eqref{4.3} that
\begin{equation}
\mathbf{u}(t)\leq e^{(t-t_0)\mathbf{H}_{\lambda}}\mathbf{u}(t_0)
+\frac{1}{\lambda}\int_{t_0}^te^{(t-s)\mathbf{H}_{\lambda}}\mathbf{C}ds.
\label{4.4}\end{equation} By Lemma 4.1 in \cite{SZH} again,
$\frac{1}{\lambda}\mathbf{H}_{\lambda}$ is negative definite, then
we have
$$\|e^{(t-t_0)\mathbf{H}_{\lambda}}\mathbf{u}(t_0)\|\leq
e^{(t-t_0)\mu_{\max}}\|\mathbf{u}(t_0)\|,$$ where
$\mu_{\max}=-\beta-2\cos\frac{N\pi}{N+2}<0$ is the maximal
eigenvalue of $\frac{1}{\lambda}\mathbf{H}_{\lambda}$. Thus
\eqref{4.4} implies that
$$\mathbf{u}(t)\rightarrow \mathbf{0} \ \ \mbox{as} \
\lambda\rightarrow\infty,$$ and
$$\|x_{\lambda}^{(1)}(t)-x_{\lambda}^{(2)}(t)\|^2\rightarrow 0\
\ \mbox{and}\ \ \|x_{\lambda}^{(\frac{N}{2}+1)}(t)
-x_{\lambda}^{(\frac{N}{2}+2)}(t)\|^2\rightarrow 0,$$ uniformly for
$t\in [T_1, T_2]$ as $\lambda\rightarrow\infty$.

Similarly, when $N$ is odd, we can rewrite the inequalities in the
matrix form
\begin{equation}
\mathbf{\dot{v}}(t)\leq \mathbf{\tilde{H}}_{\lambda}\mathbf{v}(t)
+\frac{1}{\lambda}\mathbf{\tilde{C}}, \label{4.5}\end{equation}
which uniformly for $t\in [T_1, T_2]$, where for $t\in \mathbb{R}$,
$$\mathbf{v}(t)=(\|x_{\lambda}^{(1)}(t)-x_{\lambda}^{(2)}(t)\|^2,
\|x_{\lambda}^{(3)}(t)-x_{\lambda}^{(N)}(t)\|^2, \cdots,
\|x_{\lambda}^{(\frac{N+1}{2})}(t)-x_{\lambda}^{(\frac{N+1}{2}
+2)}(t)\|^2)^{\mathbf{T}},$$
$$\mathbf{\tilde{C}}=(C^{1, 2, 5-\beta}_{T_1, T_2}(\omega),
C^{3, N, 2-\beta}_{T_1, T_2}(\omega), \cdots, C^{\frac{N-1}{2},
\frac{N+1}{2}+3, 2-\beta}_{T_1, T_2}(\omega), C^{\frac{N+1}{2},
\frac{N+1}{2}+2, 5-\beta}_{T_1, T_2}(\omega))^{\mathbf{T}},$$ are
$\frac{N-1}{2}$-dimensional vectors, and
$$\mathbf{\tilde{H}}_{\lambda}= \left ( \begin{array}{cccccc}
     -\beta\lambda &\lambda  &0  &\cdots &0  \\
    \lambda & -\beta\lambda & \lambda  &\ddots & \vdots\\
    0&\lambda&\ddots &\ddots &0\\
    \vdots& \ddots& \ddots &-\beta\lambda &\lambda\\
    0& \cdots & 0 & \lambda &-\beta\lambda
\end{array} \right)_{\frac{N-1}{2}\times \frac{N-1}{2}}.$$
By Lemma \ref{Gronwall}, it follows from \eqref{4.5} that
\begin{equation}
\mathbf{v}(t)\leq
e^{(t-t_0)\mathbf{\tilde{H}}_{\lambda}}\mathbf{v}(t_0)
+\frac{1}{\lambda}\int_{t_0}^te^{(t-s)\mathbf{\tilde{H}}_{\lambda}}
\mathbf{\tilde{C}}ds.\label{4.6}\end{equation} Just like the even
case, for uniform $t\in [T_1, T_2]$, we have
$$\|x_{\lambda}^{(1)}(t)-x_{\lambda}^{(2)}(t)\|^2\rightarrow 0, \
\ \mbox{as} \ \lambda\rightarrow\infty.$$ For other adjacent
components, the process above can be repeated. Hence, we can draw a
conclusion that the difference between any adjacent components of a
solution of the coupled RODEs system \eqref{3.1}-\eqref{3.2} tends
to zero uniformly for $t\in [T_1, T_2]$ as the coupling coefficient
goes to infinity which completes the proof.

\end{proof}

We know that all components of a solution of system
\eqref{3.1}-\eqref{3.2} have the same limit uniformly for $t\in
[T_1, T_2]$ as $\lambda\rightarrow\infty$. Now, we are in the
position to find what they converge to.

\begin{lemma}
If the assumptions \eqref{1.2} and \eqref{1.3} hold, then the random
dynamical system $\phi(t, \omega)$ generated by the solution of the
averaged RODE system
\begin{equation}
\frac{dZ}{dt_+}=\frac{1}{N}\sum_{j=1}^Nf^{(j)}(\bar{X}_t^{(j)}+Z)
+\frac{1}{N}\sum_{j=1}^N(\bar{X}_t^{(j)}+Z)\label{4.7}\end{equation}
has a singleton sets random attractor denoted by
$\{\bar{Z}(\omega)\}$. Furthermore,
$$\bar{Z}(\theta_t\omega)+\frac{1}{N}\sum_{j=1}^N\bar{X}_t^{(j)}$$
is the stationary stochastic solution of the equivalently averaged
SODE system
\begin{equation}
dz=\frac{1}{N}\sum_{j=1}^Nf^{(j)}(z)dt+\frac{1}{N}
\sum_{j=1}^Nc_jdL^{(j)}_t.\label{4.8}\end{equation}
\end{lemma}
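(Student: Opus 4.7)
\ The strategy is to follow the template of Theorem~\ref{sys1} applied to the single averaged RODE (4.7) rather than to the $N$-component coupled system (3.1). I would (i) show pathwise exponential contraction of any two solutions, (ii) construct a pullback-absorbing random ball, (iii) invoke Lemma~\ref{attractor} to obtain a singleton-sets random attractor, and (iv) transport the attractor through the Ornstein--Uhlenbeck change of variables linking (4.7) and (4.8) to read off the claimed stationary stochastic solution.

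For step (i), let $Z_1,Z_2$ be two solutions of (4.7). Differentiating $\|Z_1-Z_2\|^2$, applying (1.2) to each summand, and noting that the linear drift term $\tfrac{1}{N}\sum_j(\bar X_t^{(j)}+Z)$ contributes a clean $+2\|Z_1-Z_2\|^2$ gives
\begin{equation*}
\frac{d}{dt_+}\|Z_1(t)-Z_2(t)\|^2\leq (2-2l)\|Z_1(t)-Z_2(t)\|^2.
\end{equation*}
Since $l>4$, Lemma~\ref{Gronwall} yields exponential pathwise convergence. In contrast to Lemma~\ref{existence}, no eigenvalue analysis of a coupling matrix is needed here, because after averaging (4.7) is a single RODE in $\mathbb R^d$ with no spatial coupling left to diagonalize.

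For step (ii), an analogous energy estimate for $\|Z(t)\|^2$ --- splitting $f^{(j)}(\bar X_t^{(j)}+Z)=[f^{(j)}(\bar X_t^{(j)}+Z)-f^{(j)}(\bar X_t^{(j)})]+f^{(j)}(\bar X_t^{(j)})$ so that (1.2) applies to the first piece, and bounding the remaining cross terms by Young's inequality --- should deliver
\begin{equation*}
\frac{d}{dt_+}\|Z(t)\|^2\leq (4-2l)\|Z(t)\|^2+g(t),
\end{equation*}
with $g(t)=\bigl\|\tfrac{1}{N}\sum_{j=1}^N\bar X_t^{(j)}\bigr\|^2+\bigl|\tfrac{1}{N}\sum_{j=1}^N f^{(j)}(\bar X_t^{(j)})\bigr|^2$. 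Because $l>4$, the pullback integral $\rho(\omega):=\int_{-\infty}^0 e^{(2l-4)s}g(s)\,ds$ is $\mathbb P$-a.s.\ finite: Lemma~\ref{property}(B) bounds the $\bar X_s^{(j)}$ contribution on compacts, while the integrability hypothesis (1.3) controls the $f^{(j)}(\bar X_s^{(j)})$ contribution. The random ball $\mathbb B(\omega)=\{x\in\mathbb R^d:\|x\|^2\leq 1+\rho(\omega)\}$ is then pullback-absorbing in the attracting universe used in Theorem~\ref{sys1}.

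Steps (iii)--(iv) are then immediate. Lemma~\ref{attractor} produces a random attractor $\mathcal A\subset \mathbb B$, and the contraction obtained in step (i) forces $\mathcal A(\omega)=\{\bar Z(\omega)\}$. The change of variables $z=Z+\tfrac{1}{N}\sum_{j=1}^N\bar X_t^{(j)}$, which transforms (4.8) into (4.7) by way of $d\bar X_t^{(j)}=-\bar X_t^{(j)}dt+c_j dL_t^{(j)}$, together with the stationarity clause of Lemma~\ref{attractor}, identifies $\bar Z(\theta_t\omega)+\tfrac{1}{N}\sum_{j=1}^N\bar X_t^{(j)}$ as the stationary stochastic solution of (4.8). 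The only place where I expect a moment of care is in checking that $\rho(\omega)$ is a well-defined tempered random variable, which is precisely what Lemma~\ref{property} and (1.3) guarantee; overall the argument is a direct and slightly simplified repeat of the proof of Theorem~\ref{sys1}.
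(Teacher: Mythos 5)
Your proposal is correct and follows essentially the same route as the paper: the $(2-2l)$ contraction estimate for two solutions, the $(4-2l)$ energy estimate with a pullback-absorbing random ball whose radius is controlled by Lemma~\ref{property} and condition \eqref{1.3}, Lemma~\ref{attractor} for the attractor, and the Ornstein--Uhlenbeck change of variables to identify the stationary solution of \eqref{4.8}. The only (immaterial) difference is that your forcing term $g(t)$ keeps the averages inside the norms, whereas the paper uses the slightly larger bound $\tfrac{1}{N}\sum_{j}\bigl(\|f^{(j)}(\bar X_t^{(j)})\|^2+|\bar X_t^{(j)}|^2\bigr)$.
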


\begin{proof}
Assume that $Z_1(t)$ and $Z_2(t)$ are two solutions of \eqref{4.7},
we have
$$\frac{d}{dt_+}\|Z_1(t)-Z_2(t)\|^2\leq (2-2l)\|Z_1(t)-Z_2(t)\|^2.$$
It follows from Gronwall's lemma that
$$\|Z_1(t)-Z_2(t)\|^2\leq e^{(2-2l)t}\|Z_1(0)-Z_2(0)\|^2,$$ which implies
$$\lim_{t\rightarrow\infty}\|Z_1(t)-Z_2(t)\|^2=0,$$
because of the Lipschitz coefficient $l>4$. Then all solutions of
\eqref{4.7} converge pathwise to each other.

Now, we have to give what they converge to based on the theory of
c\`adl\`ag random dynamical systems. Let $Z(t)$ be a solution of
\eqref{4.7}, we get
$$\frac{d}{dt_+}\|Z(t)\|^2\leq (4-2l)\|Z(t)\|^2+\frac{1}{N}
\sum_{j=1}^N\|f^{(j)}(\bar{X}_t^{(j)})\|^2+\frac{1}{N}
\sum_{j=1}^N|\bar{X}_t^{(j)}|^2.$$ From Gronwall's lemma, it yields
for $t>t_0$,
\begin{eqnarray*} \|Z(t)\|^2&\leq& e^{(4-2l)(t-t_0)}\|Z(t_0)\|^2\\
&&\ \ \ \ \ \ +\frac{1}{N}\sum_{j=1}^N\int_{t_0}^te^{(4-2l)(t-\tau)}
(\|f^{(j)}(\bar{X}_{\tau}^{(j)})\|^2+|\bar{X}_{\tau}^{(j)}|^2)d\tau.
\end{eqnarray*}
By pathwise pullback convergence with $t_0\rightarrow-\infty$, the
random closed ball centered as the origin with random radius
$\tilde{R}(\omega)$ is a pullback absorbing set of $\phi(t,
\omega)$, where
$$\tilde{R}^2(\omega)=1+\frac{1}{N}\sum_{j=1}^N\int_{-\infty}^0
e^{(2l-4)\tau}(\|f^{(j)}(\bar{X}_{\tau}^{(j)})\|^2
+|\bar{X}_{\tau}^{(j)}|^2)d\tau.$$ Obviously, by Lemma
\ref{property} and condition \eqref{1.3}, the integral defined in
the right-hand side is well-defined.

By Lemma \ref{attractor}, there exists a random attractor
$\{\bar{Z}(\omega)\}$ for $\phi(t, \omega)$. Since all solutions of
\eqref{4.7} converge pathwise to each other, the random attractor
$\{\bar{Z}(\omega)\}$ are composed of singleton sets.

Note that the averaged RODE \eqref{4.7} is transformed from the
averaged SODE \eqref{4.8} by the transformation
$$Z(t, \omega)=z-\frac{1}{N}\sum_{j=1}^N\bar{X}_{t}^{(j)},$$
so the pathwise singleton sets attractor
$\bar{Z}(\theta_t\omega)+\frac{1}{N}\sum_{j=1}^N\bar{X}_t^{(j)}$ is
a stationary solution of the averaged SODE \eqref{4.8} since the
Ornstein-Uhlenbeck process is stationary.
\end{proof}

Now, we will present another main result of this work.

\begin{theorem} (Synchronization under non-Gaussian L\'evy noise.) Let
$$(\bar{x}^{(1)}_{\lambda_n}(t, \omega), \bar{x}^{(2)}_{\lambda_n}
(t, \omega), \cdots, \bar{x}^{(N)}_{\lambda_n}(t,
\omega))^\mathbf{T} =(\bar{x}^{(1)}_{\lambda_n}(\theta_t\omega),
\bar{x}^{(2)}_{\lambda_n} (\theta_t\omega), \cdots,
\bar{x}^{(N)}_{\lambda_n}(\theta_t\omega)) ^\mathbf{T}$$ be the
singleton sets random attractor of the c\`adl\`ag random dynamical
system $\phi(t, \omega)$ generated by the solution of RODEs system
\eqref{3.1}-\eqref{3.2}, then
$$((\bar{x}^{(1)}_{\lambda_n}(t, \omega), \bar{x}^{(2)}_{\lambda_n}
(t, \omega), \cdots, \bar{x}^{(N)}_{\lambda_n}(t,
\omega))^\mathbf{T}) \rightarrow (\bar{Z}(t, \omega), \bar{Z}(t,
\omega), \cdots, \bar{Z}(t, \omega))^\mathbf{T}$$ in Skorohod metric
pathwise uniformly for $t$ belongs to any bounded time-interval
$[T_1, T_2]$ for any sequence $\lambda_n\rightarrow \infty$, where
$\bar{Z}(t, \omega)=\bar{Z}(\theta_t\omega)$ is the solution of the
averaged RODE \eqref{4.7} and $\bar{Z}(\omega)$ is the singleton
sets random attractor of the c\`adl\`ag random dynamical system
$\phi(t, \omega)$ which generated by the solution of averaged RODE
\eqref{4.7}.
\end{theorem}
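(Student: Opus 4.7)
The plan is to exploit the cyclic symmetry of the coupling. First I would introduce the spatial average
$$\bar{y}_{\lambda_n}(t,\omega) := \frac{1}{N}\sum_{j=1}^N \bar{x}^{(j)}_{\lambda_n}(t,\omega)$$
and sum the RODEs \eqref{3.1} cyclically over $j$; since $\sum_{j}(x^{(j-1)}-2x^{(j)}+x^{(j+1)})=0$, the coupling drops out and
$$\frac{d\bar{y}_{\lambda_n}}{dt_+} = \frac{1}{N}\sum_{j=1}^N F^{(j)}(\bar{x}^{(j)}_{\lambda_n}(t),\bar{X}_t^{(j)}).$$
Lemma \ref{sys2} guarantees that $\bar{x}^{(j)}_{\lambda_n}(t,\omega)-\bar{y}_{\lambda_n}(t,\omega)\to 0$ uniformly on any bounded interval for every $j$, so this right-hand side is, up to a vanishing error, the drift of the averaged RODE \eqref{4.7} evaluated at $\bar{y}_{\lambda_n}$.

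Next I would control $\delta_n(t):=\|\bar{y}_{\lambda_n}(t)-\bar{Z}(t)\|^2$ by a scalar differential inequality. Differentiating in the right-hand sense and splitting each $f^{(j)}$-difference by inserting $\pm f^{(j)}(\bar{y}_{\lambda_n}+\bar{X}_t^{(j)})$, the one-sided dissipative Lipschitz bound \eqref{1.2} controls the ``diagonal'' bracket by $-l\delta_n$ per index, while the ``off-diagonal'' bracket $f^{(j)}(\bar{x}^{(j)}_{\lambda_n}+\bar{X}_t^{(j)})-f^{(j)}(\bar{y}_{\lambda_n}+\bar{X}_t^{(j)})$ is estimated via Cauchy--Schwarz and Young's inequality, using that $\bar{x}^{(j)}_{\lambda_n}-\bar{y}_{\lambda_n}\to 0$ uniformly and that $f^{(j)}$ is continuous on the range in question. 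Together with the linear contribution $+2\delta_n$ from the $x+\bar{X}_t^{(j)}$ part of $F^{(j)}$, this yields
$$\frac{d}{dt_+}\delta_n(t)\leq \bigl(\tfrac{5}{2}-2l\bigr)\delta_n(t)+\varepsilon_n(t),$$
with $\varepsilon_n(t)\to 0$ uniformly on every bounded interval. Since $l>4$ the leading coefficient is strictly negative; integrating via Lemma \ref{Gronwall} from an auxiliary $t_0<T_1$, letting $n\to\infty$ to kill the forcing integral, and then pulling $t_0\to-\infty$ to absorb the homogeneous term (using the uniform absorbing-ball bound $R_\lambda(\omega)\leq R_1(\omega)$ inherited from \eqref{3.10} together with Lemma \ref{property}) gives $\delta_n(t)\to 0$ uniformly on $[T_1,T_2]$.

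Combining this with Lemma \ref{sys2} yields $\bar{x}^{(j)}_{\lambda_n}(t,\omega)\to \bar{Z}(t,\omega)$ uniformly on $[T_1,T_2]$ for each $j$, and since uniform convergence on a compact interval dominates the Skorohod metric (take the identity time-change), the theorem follows. The main obstacle I anticipate is establishing the uniform smallness of $\varepsilon_n(t)$: this needs a common continuity modulus for each $f^{(j)}$ over an $\omega$-dependent compact set of arguments. This is legitimate because the attractor components $\bar{x}^{(j)}_{\lambda_n}(t,\omega)$ lie inside the absorbing ball $\mathbb{B}_\lambda$ whose radius is uniform in $\lambda\geq 1$ (as noted just before Lemma \ref{sys2} via $\rho_\lambda(\omega)\leq\rho_1(\omega)$), and $\bar{X}_t^{(j)}$ is pathwise bounded on $[T_1,T_2]$ by Lemma \ref{property}, so all arguments stay in a deterministic $\omega$-dependent compact on which the continuity of $f^{(j)}$ supplies the modulus.
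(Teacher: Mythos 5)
Your proposal is correct in substance but diverges from the paper's proof at the decisive step. Both arguments open the same way: form the spatial average $\bar{Z}_\lambda=\frac{1}{N}\sum_j\bar{x}^{(j)}_\lambda$ (your $\bar y_{\lambda_n}$ is exactly \eqref{4.9}), observe that the cyclic coupling telescopes away so the average satisfies \eqref{4.10}, and invoke Lemma \ref{sys2} to replace individual components by the average. From there the paper proceeds by compactness: it bounds $\frac{d}{dt_+}\bar Z_\lambda$ uniformly in $\lambda$, applies the Ascoli--Arzel\`a theorem in $D([T_1,T_2],\mathbb{R}^d)$ to extract a convergent subsequence, passes to the limit in the integral form of \eqref{4.10} to show the limit solves \eqref{4.7}, and finally identifies that limit with $\bar Z(\theta_t\omega)$ via uniqueness of the singleton attractor, so that all subsequential limits coincide. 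You instead compare $\bar y_{\lambda_n}$ \emph{directly} to the already-constructed stationary solution $\bar Z$ through a one-sided dissipative Gronwall inequality, which eliminates the subsequence extraction and the somewhat delicate ``every subsequence has the same limit'' step, and yields an explicit decay estimate. The price is the two technical inputs you partly flag: (i) the off-diagonal term needs a uniform modulus of continuity for each $f^{(j)}$ on an $\omega$-dependent compact containing all arguments $\bar x^{(j)}_{\lambda_n}(t)+\bar X_t^{(j)}$ for $t\in[T_1,T_2]$ and $\lambda_n\ge 1$ --- legitimate, as you say, via $\rho_\lambda\le\rho_1$ and Lemma \ref{property}; and (ii) the pullback $t_0\to-\infty$ requires control of $\delta_n(t_0)$, and $R_1(\theta_{t_0}\omega)$ is in general only tempered rather than bounded along the orbit, so you should note explicitly that the homogeneous term $e^{(\frac52-2l)(t-t_0)}\delta_n(t_0)$ vanishes because the exponential decay (with $2l-\frac52>0$) dominates the subexponential growth of the pullback radius. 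With that remark added, your argument is complete and arguably cleaner than the paper's at the identification stage, while the paper's compactness route asks less of $f^{(j)}$ quantitatively and avoids any pullback in the initial time.
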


\begin{proof} Define
\begin{equation}
\bar{Z}_{\lambda}(\omega)=\frac{1}{N}\sum_{j=1}^N
\bar{x}^{(j)}_{\lambda}(\omega),\label{4.9}\end{equation} where
$\{\bar{x}^{(1)}_{\lambda}(\omega), \bar{x}^{(2)}_{\lambda}(\omega),
\cdots, \bar{x}^{(N)}_{\lambda}(\omega)\}$ is the singleton sets
random attractor of the c\`adl\`ag RDS generated by RODEs system
\eqref{3.1}-\eqref{3.2}. Thus, $\bar{Z}_{\lambda}(t,
\omega)=\bar{Z}_{\lambda}(\theta_t\omega)$ satisfies
\begin{equation}
\frac{d\bar{Z}_{\lambda}(t, \omega)}{dt_+}=\frac{1}{N}
\sum_{j=1}^Nf^{(j)}(\bar{X}_t^{(j)}+\bar{x}^{(j)}_{\lambda}(t,
\omega))
+\frac{1}{N}\sum_{j=1}^N(\bar{X}_t^{(j)}+\bar{x}^{(j)}_{\lambda} (t,
\omega)),\label{4.10}\end{equation} Then, we get
\begin{eqnarray*}
\|\frac{d\bar{Z}_{\lambda}(t, \omega)}{dt_+}\|^2\leq \frac{2}{N}
\sum_{j=1}^N(\|f^{(j)}(\bar{X}_t^{(j)}+\bar{x}^{(j)}_{\lambda} (t,
\omega))\|^2+|\bar{X}_t^{(j)}+\bar{x}^{(j)}_{\lambda}(t,
\omega)|^2),
\end{eqnarray*}
by the c\`adl\`ag property of the solutions in \cite{Applebaum} and
the fact that these solutions belong to the compact ball
$\mathbb{B}_1(\omega)$, it follows that
$$\sup_{t\in [T_1, T_2]}\|\frac{d\bar{Z}_{\lambda}(t, \omega)}{dt_+}
\|\leq (\frac{2}{N}\sum_{j=1}^N\frac{\alpha}{4} \mathbf{C}_{T_1,
T_2}^{j, \bullet, \alpha}(\omega))^\frac{1}{2}<\infty.$$ By the
Ascoli-Arzel$\grave{a}$ theorem in $D([T_1, T_2], \mathbb{R}^d)$ in
\cite{Billingsley}, there exists a subsequence
$\lambda_{n_{k}}\rightarrow\infty$ such that
$\bar{Z}_{\lambda_{n_{k}}}(t, \omega)$ converges to $\bar{Z}(t,
\omega)$ in Skorohod metric as $n_k\rightarrow\infty$.

Since difference between any two components of a solution of the
coupled RODEs system \eqref{3.1}-\eqref{3.2} tends to zero uniformly
for $t\in [T_1, T_2]$ as $\lambda\rightarrow\infty$, from
\eqref{4.9}, we have
\begin{eqnarray*}
\bar{x}^{(j)}_{\lambda_{n_{k}}}(t, \omega)=\bar{Z}_{\lambda_{n_{k}}}
(t, \omega)+\frac{1}{N}\sum_{j'\neq j}\sum_{j''\neq j'}
(\bar{x}^{(j'')}_{\lambda_{n_{k}}}(t, \omega)-\bar{x}^{(j')}
_{\lambda_{n_{k}}}(t, \omega))\rightarrow \bar{Z}(t, \omega)
\end{eqnarray*}
uniformly for $t\in [T_1, T_2]$ as
$\lambda_{n_{k}}\rightarrow\infty$ for $j=1, \cdots, N$.
Furthermore, it follows from \eqref{4.10} that for $t\geq T_1$,
\begin{equation*}
\bar{Z}_{\lambda}(t, \omega)=\bar{Z}_{\lambda}(T_1, \omega)
+\frac{1}{N}\sum_{j=1}^N\int_{T_1}^t(f^{(j)}(\bar{X}_s^{(j)}
+\bar{x}^{(j)}_{\lambda}(s, \omega))+(\bar{X}_s^{(j)}
+\bar{x}^{(j)}_{\lambda}(s, \omega)))ds.
\end{equation*}
Thus,
\begin{eqnarray*}
 \bar{Z}(t, \omega)=\bar{Z}(T_1, \omega)+\frac{1}{N}
 \sum_{j=1}^N\int_{T_1}^t(f^{(j)}(\bar{X}_s^{(j)}+\bar{Z}(s, \omega))
 +(\bar{X}_s^{(j)}+\bar{Z}(s, \omega)))ds,
\end{eqnarray*}
uniformly for $t\in [T_1, T_2]$ as
$\lambda_{n_{k}}\rightarrow\infty$, which implies that
$\bar{Z}_{\lambda}(s, \omega)$ solves RODE \eqref{4.7}. Then, we
note that all possible sequences of $\bar{Z}_{\lambda_{n_{k}}}(t,
\omega)$ converges to the same limit $\bar{Z}(t, \omega)$ uniformly
for $t\in [T_1, T_2]$ as $\lambda_{n}\rightarrow\infty$. Since the
RDS generated by the solutions of RODE \eqref{4.7} has a singleton
sets random attractor $\{\bar{Z}(\omega)\}$, the stationary
stochastic process $\bar{Z}(\theta_t\omega)$ must be equal to
$\bar{Z}(t, \omega)$, i.e. $\bar{Z}(t,
\omega)=\bar{Z}(\theta_t\omega)$, which completes the proof.
\end{proof}

As a obvious result of Theorem \ref{sys1}, we get

\begin{corollary}
$$((\bar{x}^{(1)}_{\lambda}(t, \omega),
\bar{x}^{(2)}_{\lambda}(t, \omega), \cdots,
\bar{x}^{(N)}_{\lambda}(t, \omega))^\mathbf{T})\rightarrow
(\bar{Z}(t, \omega), \bar{Z}(t, \omega), \cdots, \bar{Z}(t,
\omega))^\mathbf{T}$$ in Skorohod metric pathwise uniformly for
$t\in [T_1, T_2]$ as $\lambda\rightarrow\infty$.
\end{corollary}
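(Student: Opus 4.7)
The plan is to obtain the corollary as a straightforward consequence of the preceding synchronization theorem, using only a standard subsequence/net argument to upgrade convergence along arbitrary sequences to convergence as $\lambda\to\infty$. First I would recall that Theorem \ref{sys1} guarantees, for every fixed $\lambda>0$, the existence of the singleton set random attractor $\{(\bar{x}^{(1)}_{\lambda}(\omega),\ldots,\bar{x}^{(N)}_{\lambda}(\omega))^{\mathbf T}\}$ of the c\`adl\`ag RDS generated by \eqref{3.1}--\eqref{3.2}. In particular, the left-hand side $(\bar{x}^{(j)}_{\lambda}(t,\omega))_{j=1}^{N}=(\bar{x}^{(j)}_{\lambda}(\theta_{t}\omega))_{j=1}^{N}$ of the corollary is well-defined for each $\lambda>0$ and each $\omega\in\Omega$.

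Next I would invoke the preceding theorem, which proves exactly that for every sequence $\lambda_n\to\infty$, the vector $(\bar{x}^{(j)}_{\lambda_n}(t,\omega))_{j=1}^{N}$ converges in the Skorohod metric, pathwise and uniformly in $t\in[T_1,T_2]$, to $(\bar{Z}(t,\omega),\ldots,\bar{Z}(t,\omega))^{\mathbf T}$, where $\bar{Z}(t,\omega)=\bar{Z}(\theta_t\omega)$ is the singleton attractor of the averaged RODE \eqref{4.7}. Since this holds for \emph{every} diverging sequence $\lambda_n$, it follows that the full net $\{(\bar{x}^{(j)}_{\lambda}(\cdot,\omega))_{j=1}^{N}\}_{\lambda>0}$ converges as $\lambda\to\infty$ to the same limit: if it did not, I could extract a sequence $\lambda_n\to\infty$ whose values stay bounded away from $(\bar{Z}(\cdot,\omega))_{j=1}^{N}$ in the uniform Skorohod metric on $[T_1,T_2]$, contradicting the theorem applied to this $\lambda_n$.

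There is no real obstacle here: the corollary is a cosmetic reformulation of the previous theorem. The only nontrivial content is already encoded in the statement of Theorem \ref{sys1} (which produces the singleton attractor for each $\lambda$) together with the synchronization theorem (which guarantees subsequential convergence to $(\bar{Z},\ldots,\bar{Z})^{\mathbf T}$); the passage from ``every sequence'' to ``as $\lambda\to\infty$'' is the standard fact that a net in a topological space converges to $L$ iff every sequence extracted from it converges to $L$.
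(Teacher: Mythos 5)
Your proposal is correct and matches the paper's treatment: the paper offers no separate proof, presenting the corollary as an immediate consequence of the preceding synchronization theorem, and your passage from ``every sequence $\lambda_n\to\infty$'' to ``as $\lambda\to\infty$'' via the standard extraction-of-a-bad-sequence argument in a metric space is exactly the (routine) step being left implicit. The only nitpick is your closing remark about nets in general topological spaces, which is false in that generality but harmless here since the argument you actually run uses only the sequential criterion for limits of a real-parameter family in a metric space.
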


\begin{remark}
The results in this paper hold just in almost everywhere sense. In
the equation \eqref{1.1} we should replace the $X_t^{(j)}$ with
$X_{t_{-}}^{(j)}$ because we must take the left limit to make sure
that c\`adl\`ag solution process $X_t^{(j)}$ is predictable and
unique \cite{PZ}. For the typographical convenience, however, we
will use $X_t^{(j)}$ instead of  $X_{t_{-}}^{(j)}$ for the rest of
the paper. Moreover, in the case of additive noise, the distinction
for left limit or not is not necessary because if we have to
consider the integral form of equation \eqref{1.1},
$f^{(j)}(X_t^{(j)})$ has only countable discontinuous points and is
still Riemann and Legesgue integrable, where $j=1, \cdots, N$.
\end{remark}

\section{Conflict of Interests}
The authors declare that there is no conflict of interests regarding
the publication of this article.



\end{document}